\pgfplotsset{my style/.append style={axis x line=middle, axis y line=middle, xlabel={$\alpha$}, ylabel={$\beta$}, axis equal }}
\pgfplotsset{every axis legend/.append style={
at={(1.02,1)},anchor= north west}}
\pgfplotsset{samples=100}
\newtheorem{theorem}{Theorem}[section]
\newtheorem{lemma}[theorem]{Lemma}
\newtheorem{remark}[theorem]{Remark}
\newtheorem{proposition}[theorem]{Proposition}
\newtheorem{corollary}[theorem]{Corollary}
\newtheorem{definition}[theorem]{Definition}
\newtheorem{example}[theorem]{Example}
\newcommand\be{\begin{equation}}
\newcommand\ee{\end{equation}}
\newcommand\bn{\begin{eqnarray}}
\newcommand\en{\end{eqnarray}}
\newcommand\bns{\begin{eqnarray*}}
\newcommand\ens{\end{eqnarray*}}
\newcommand\bd{\begin{definition}}
\newcommand\ed{\end{definition}}
\newcommand\br{\begin{remark}}
\newcommand\er{\end{remark}}
\newcommand\bt{\begin{theorem}}
\newcommand\et{\end{theorem}}
\newcommand\bp{\begin{proposition}}
\newcommand\ep{\end{proposition}}
\newcommand\bc{\begin{corollary}}
\newcommand\ec{\end{corollary}}
\newcommand\bl{\begin{lemma}}
\newcommand\el{\end{lemma}}
\newcommand\pf{\begin{proof}}
\newcommand\bR{{\mathbb R}}
\newcommand\bN{{\mathbb N}}
\newcommand\cR{{\cal R}}
\newcommand{\F}{\mbox{$\mathcal{F}$}}
\begin{document}

\title{Total Positivity of Almost-Riordan Arrays}

\author[1]{Tian-Xiao He}
\author[2]{Roksana S\l{}owik}
\affil[1]{Department of Mathematics, Illinois Wesleyan University, Bloomington, IL 61702-2900, Indiana, USA}
\affil[2]{Faculty of Applied Mathematics, Silesian University of Technology, Kaszubska 23, 44-100 Gliwice, Poland}

\pagestyle{myheadings} 
\markboth{T. X. He and R. S\l{}owik}{TP Almost-Riordan Arrays}

\maketitle

\begin{abstract}
\noindent 
In this paper we study the total positivity of almost-Riordan arrays $(d(t)|\, g(t), f(t))$ and establish its necessary conditions and sufficient conditions, particularly, for some well used formal power series $d(t)$. We present a semidirect product of an almost-array and use it to transfer a total positivity problem for an almost-Riordan array to the total positivity problem for a quasi-Riordan array. We find the sequence characterization of total positivity of the almost-Riordan arrays. The production matrix $J$ of an almost-Riordan array $(d|\, g,f)$ is presented so that $J$ is totally positive implies the total positivity of both the almost-Riordan array $(d|\, g,f)$ and the Riordan array $(g,f)$. We also present a counterexample to illustrate that this sufficient condition is not necessary.  If the production matrix $J$ is tridiagonal, then the expressions of its principal minors are given. By using expressions, we find a sufficient and necessary condition of the total positivity of almost-Riordan arrays with tridiagonal production matrices. A numerous examples are given to demonstrate our results.

\vskip .2in \noindent AMS Subject Classification: 05A15, 05A05, 15B36, 15A06, 05A19, 11B83.

\vskip .2in \noindent \textbf{Key Words and Phrases:} Riordan array, Riordan group, almost-Riordan array, almost-Riordan group, quasi-Riordan array.
\end{abstract}

\section{Introduction}\label{Sec1}
Following Karlin \cite{Kar} and Pinkus \cite{Pin}, an infinite matrix is called totally positive (abbreviately, TP), if its minors of all orders are nonnegative. An infinite nonnegative sequence $(a_n)_{n\geq 0}$ is called a P\'olya frequency sequence (abbreviately, PF), if its Toeplitz matrix

\[
\left[a_{i-j}\right]_{i,j\geq 0}=\left[ \begin{array} {lllll} a_0 & & & &  \\
a_1& a_0 & & & \\ a_2 &a_1& a_0& &  \\ a_3& a_2 & a_1& a_0 & \\
\vdots& \vdots &\vdots&\vdots & \ddots \end{array}\right]
\]
is TP. We say that a finite sequence $(a_0, a_1,\ldots, a_n)$ is PF if the corresponding infinite sequence $(a_0, a_1, \ldots, a_n, 0, \ldots)$ is PF. Denote by ${\bN}$ the set of all nonnegative integers. A fundamental characterization for PF sequences is given by Schoenberg et al.\cite{AESW, ASW, Kar}, which states that a sequence $(a_n)_{n\geq 0}$ is PF if and only if its generating function can be written as 

\begin{equation}\label{0}
\sum_{n\geq 0} a_n t^n=C t^ke^{\gamma t}\frac{\Pi_{j\geq 0} (1+\alpha_j t)}{\Pi_{j\geq 0} (1-\beta_j t)},
\end{equation}
where $C>0$, $k\in {\bN}$, $\alpha_j$, $\beta_j$, $\gamma \geq 0$, and $\sum_{j\geq 0}(\alpha_j+\beta_j)<\infty$. In this case, the above generating function is called a P\'olya frequency formal power series. For some relevant results, see, for example, Brenti \cite{Bre} and Pinkus \cite{Pin}. 

Let us denote by $\F_k$ the set of all formal power series of the form $\sum_{n=k}^\infty a_nt^n$ with $a_k\neq 0$. 

\begin{definition}\label{def:1.3}\cite{Barry16}
Let $d, g\in \F_0$ with $d(0), g(0)=1$ and $f\in \F_1$ with $f'(0)=1$. We call the following matrix an almost-Riordan array with respect to $d,g,$ and $f$ and denote it by $(d|\,g,f)$:

\begin{equation}\label{1.13}
(d|\,g,f)=(d, tg, tgf, tgf^2,\cdots),
\end{equation}
where $d$, $tg$, $tgf$, $t^2gf\cdots$, are the generating functions of the $0$th, $1$st, $2$nd, $3$rd, $\cdots$, columns of the matrix $(d|\,g,f)$, respectively. 
\end{definition}

It is clear that $(d|\,g,f)$ can be written as 
\begin{equation}\label{1.14}
(d|\,g,f)=\left( \begin{matrix} d(0) & 0\\ (d-d(0))/t & (g,f) \end{matrix}\right),
\end{equation}
where $(g,f)=(g,gf, gf^2,\ldots)$, a Riordan array. Particularly, if $d=g$ and $f=t$, then the almost-Riordan array $(d|\,g,f)$ reduces to the Appell-type Riordan array $(g, t)$. 

Barry, Pantelidis, and one of the authors \cite{BHP} present the following analogy for the almost-Riordan arrays.

\begin{theorem}\label{thm:2.3}\cite{BHP}
The set of all almost-Riordan arrays defined by \eqref{1.13} forms a group, denoted by $a{\cR}$, with respect to the multiplication defined by 
\begin{equation}\label{1.15}
(a|\,g,f)(b|\,d,h)=\left( \left. a+\frac{tg}{f}(b(f)-1)\right |\,gd(f), h(f)\right),
\end{equation}
where $a, g, b, d\in \F_0$ with $a(0), g(0), b(0), d(0)=1$ and $f, h\in \F_1$ with $f'(0)=1$ and $h'(0)=1$, respectively, and $(a|\,g,f)$ and $(b|\,d,h)$ are the almost-Riordan arrays defined by \eqref{1.13} or \eqref{1.14}.
\end{theorem}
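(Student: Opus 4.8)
The plan is to verify that the proposed multiplication formula \eqref{1.15} makes the set $a\cR$ into a group, which requires checking closure, associativity, existence of an identity, and existence of inverses. I would organize the proof around the block-matrix decomposition \eqref{1.14}, since that representation is the most transparent way to see how the almost-Riordan structure interacts with ordinary matrix multiplication. First I would confirm that the formula in \eqref{1.15} is exactly what one obtains by multiplying the two block matrices
\[
\left(\begin{matrix} a(0) & 0\\ (a-a(0))/t & (g,f)\end{matrix}\right)\left(\begin{matrix} b(0) & 0\\ (b-b(0))/t & (d,h)\end{matrix}\right),
\]
using the known Riordan-array product $(g,f)(d,h)=(g\cdot d(f),\, h(f))$ in the lower-right block. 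The only delicate entry is the lower-left block, which involves the product of $(g,f)$ with the column vector $(b-b(0))/t$; identifying this with the series $\frac{tg}{f}(b(f)-1)$ appearing in \eqref{1.15} is the computational heart of the closure argument. Here one uses the fundamental theorem of Riordan arrays, namely that $(g,f)$ acting on a series $\phi(t)$ yields $g(t)\,\phi(f(t))$, applied to $\phi=(b-b(0))/t$.

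Having established closure, I would next treat associativity. Since matrix multiplication is associative whenever it is defined, and since I will have shown in the closure step that the group product coincides with honest block-matrix multiplication of lower-triangular arrays, associativity follows for free; I would state this rather than recompute \eqref{1.15} three times. For the identity element I would verify that $(1\,|\,1,t)$, corresponding to the identity matrix in block form $\bigl(\begin{smallmatrix} 1 & 0\\ 0 & I\end{smallmatrix}\bigr)$, satisfies $(a|\,g,f)(1|\,1,t)=(a|\,g,f)=(1|\,1,t)(a|\,g,f)$ by direct substitution into \eqref{1.15}, checking both orders since the group is nonabelian.

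The main obstacle, and the step I would devote the most care to, is producing the inverse. Given $(a|\,g,f)$, I would first recall that the lower-right Riordan block $(g,f)$ is invertible in the Riordan group with inverse $(1/g(\bar f),\,\bar f)$, where $\bar f$ denotes the compositional inverse of $f$. The inverse almost-Riordan array must have the form $(b|\,1/g(\bar f),\bar f)$ for a suitable $b\in\F_0$ with $b(0)=1$, and I would determine $b$ by imposing that the first coordinate of the product $(a|\,g,f)(b|\,\tfrac1{g(\bar f)},\bar f)$ equals $1$. Solving the resulting functional equation
\[
a+\frac{tg}{f}\bigl(b(f)-1\bigr)=1
\]
for $b$ forces $b(f)=1-\tfrac{f}{tg}(a-1)$, whence $b(t)=1-\tfrac{\bar f}{t\,g(\bar f)}\bigl(a(\bar f)-1\bigr)$ after substituting $t\mapsto\bar f(t)$; I would then check that this $b$ lies in $\F_0$ with $b(0)=1$ and that the product in the reverse order also gives the identity. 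The trickiest bookkeeping is confirming that the apparent denominators $t$ and $f$ cancel so that every series involved is a genuine element of $\F_0$, which ultimately rests on the normalizations $a(0)=g(0)=1$ and $f'(0)=1$ built into the definition.
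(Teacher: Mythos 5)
The paper itself contains no proof of this theorem: it is quoted from the manuscript \cite{BHP}, so your argument can only be judged on its own merits. Your overall strategy is sound and is surely the intended one: identifying the product \eqref{1.15} with block multiplication of the matrices in \eqref{1.14}, using the fundamental theorem of Riordan arrays on the column $(b-b(0))/t$ to produce the lower-left block $\frac{a-1}{t}+\frac{g}{f}\bigl(b(f)-1\bigr)$, inheriting associativity from matrix multiplication of row-finite lower-triangular matrices, and verifying that $(1|\,1,t)$ is a two-sided identity all go through exactly as you describe.

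The one step that fails as written is the explicit inverse. Your functional equation $b(f)=1-\frac{f}{tg}(a-1)$ is correct, but you carry out the substitution $t\mapsto\overline{f}(t)$ incorrectly: under this substitution $f\mapsto t$, $t\mapsto\overline{f}$, $g\mapsto g(\overline{f})$, $a\mapsto a(\overline{f})$, so the correct solution is
\[
b(t)=1-\frac{t}{\overline{f}(t)\,g(\overline{f}(t))}\bigl(a(\overline{f}(t))-1\bigr),
\]
whereas you wrote $b(t)=1-\frac{\overline{f}}{t\,g(\overline{f})}\bigl(a(\overline{f})-1\bigr)$, with the factor $t/\overline{f}$ inverted. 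A concrete check: take $a=1+t$, $g=1$, $f=t/(1-t)$, so $\overline{f}=t/(1+t)$. Then $\frac{tg}{f}=1-t$ and the equation $a+\frac{tg}{f}(b(f)-1)=1$ forces $b(f)=1-\frac{t}{1-t}$, hence $b(t)=1-t$; the corrected formula reproduces this, while yours gives $1-\frac{t}{(1+t)^2}$. Plugging your $b$ into \eqref{1.15} would therefore not return the identity, so the verification you postponed (``check that the product gives the identity'') would fail. The fix is a one-line correction, after which your argument is complete; note that $b(0)=1$ then holds because $t/\overline{f}\to 1$, $g(\overline{f})\to 1$, and $a(\overline{f})-1\to 0$ as $t\to 0$, which is precisely where the normalizations $a(0)=g(0)=1$ and $f'(0)=1$ enter, and the same normalizations should also be cited in the closure step to confirm that $gd(f)$ and $h(f)$ satisfy the defining conditions of an almost-Riordan array.
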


Let us introduce one more group that is closely related to the groups of Riordan and almost-Riordan arrays.

\begin{definition}\label{def:2.3}\cite{He} 
Let $g\in \F_0$ with $g(0)=1$ and $f\in \F_1$. We call the following matrix a quasi-Riordan array and denote it by $[g,f]$.
\begin{equation}\label{2.6}
[g,f]:=(g,f,tf,t^2f,\ldots),
\end{equation}
where $g$, $f$, $tf$, $t^2f\cdots$ are the generating functions of the $0$th, $1$st, $2$nd, $3$rd, $\cdots$, columns of the matrix $[g,f]$, respectively. It is clear that $[g,f]$ can be written as 
\begin{equation}\label{2.8}
[g,f]=\left( \begin{matrix} g(0) & 0\\ (g-g(0))/t & (f/t,t)\end{matrix}\right),
\end{equation}
where $(f,t)=(f,tf,t^2f,t^3f,\ldots)$ and $(f/t, t)$ is an Appell Riordan array. 
Particularly, if $f=tg$, then the quasi-Riordan array $[g,tg]=(g, t)$, a Appell-type Riordan array. 

Clearly, $[g,f]=(g|f/t, t)$. 

Let $A$ and $B$ be $m\times m$ and $n\times n$ matrices, respectively. Then we define the direct sum of $A$ and $B$ by

\begin{equation}\label{2.5} 
A\oplus B =\left[ \begin{matrix} A &0 \\ 0 &B\end{matrix}\right]_{(m+n)\times(m+n)}.
\end{equation}

In this notation the Riordan array $(g,f)$ satisfies (see for instance \cite{MMW}) 

\begin{equation}\label{2.9}
(g,f)=[g,f]([1]\oplus (g,f)).
\end{equation}
\end{definition}

Denote by $q{\cR}$ the set of all quasi-Riordan arrays defined by \eqref{2.6}. In \cite{He} it is shown that $q{\cR}$ is a group with respect to regular matrix multiplication. More precisely, there is the following result.

\begin{theorem}\label{thm:1.2}\cite{He}
The set of all quasi-Riordan arrays $q{\cR}$ is a group, called the quasi-Riordan group, with respect to the multiplication represented in 
\begin{equation}\label{1.11}
[g,f][d,h]=\left[g+\frac{f}{t}(d-1), \frac{fh}{t}\right],
\end{equation}
which is derived from the first fundamental theorem for quasi-Riordan arrays (FFTQRA),

\begin{equation}\label{1.12}
[g,f]u=gu(0)+\frac{f}{t}(u-u(0)).
\end{equation}
Hence, $[1,t]$ is the identity of $q{\cR}$. 
\end{theorem}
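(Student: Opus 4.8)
The plan is to derive everything from the first fundamental theorem \eqref{1.12}, which I would establish first by a direct column computation. Reading off the block form in Definition \ref{def:2.3}, the $0$th column of $[g,f]$ has generating function $g$, while for $k\ge 1$ the $k$th column has generating function $t^{k-1}f$, as recorded in \eqref{2.6}. Hence, writing a column vector $u=(u_0,u_1,\ldots)^T$ via its generating function $u(t)=\sum_{n\ge 0}u_nt^n$, the product $[g,f]u$ has generating function
\[
[g,f]u=u_0\,g+\sum_{k\ge 1}u_k\,t^{k-1}f=g\,u(0)+\frac{f}{t}\bigl(u-u(0)\bigr),
\]
where the last equality uses $\sum_{k\ge 1}u_k t^{k-1}=\frac{1}{t}\sum_{k\ge 1}u_kt^k=\frac{1}{t}\bigl(u-u(0)\bigr)$. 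This is exactly \eqref{1.12}, and no further input is needed beyond the stated column structure.

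Next I would obtain the multiplication rule \eqref{1.11} by applying \eqref{1.12} columnwise to $[d,h]$, whose columns have generating functions $d,h,th,t^2h,\ldots$. For the $0$th column, $d(0)=1$ gives $[g,f]d=g+\frac{f}{t}(d-1)$; for $k\ge 1$ the column $t^{k-1}h$ has vanishing constant term since $h\in\F_1$, so \eqref{1.12} yields $[g,f]\bigl(t^{k-1}h\bigr)=\frac{f}{t}\,t^{k-1}h=t^{k-1}\cdot\frac{fh}{t}$. These are precisely the columns of $\bigl[g+\frac{f}{t}(d-1),\,\frac{fh}{t}\bigr]$, proving \eqref{1.11}. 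The same computation establishes closure: since $\frac{f}{t}\in\F_0$ and $d-1\in\F_1$, the correction $\frac{f}{t}(d-1)$ lies in $\F_1$, so the first component has constant term $g(0)=1$; and $fh\in\F_2$ forces $\frac{fh}{t}\in\F_1$. Thus the product again belongs to $q\cR$.

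With \eqref{1.11} in hand, the remaining group axioms are routine. Associativity is inherited from ordinary matrix multiplication, of which \eqref{1.11} is merely the explicit form. For the identity I would substitute $d=1,h=t$ and $g=1,f=t$ into \eqref{1.11}, getting $[g,f][1,t]=\bigl[g,\,\frac{ft}{t}\bigr]=[g,f]$ and $[1,t][g,f]=\bigl[1+\frac{t}{t}(g-1),\,\frac{tf}{t}\bigr]=[g,f]$, so $[1,t]$ (in fact the identity matrix) is a two-sided identity. For inverses I would solve $[g,f][d,h]=[1,t]$: the equation $\frac{fh}{t}=t$ forces $h=t^2/f$, and $g+\frac{f}{t}(d-1)=1$ forces $d=1+\frac{t}{f}(1-g)$. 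Because $f\in\F_1$ has nonzero coefficient of $t$, the series $f/t\in\F_0$ has nonzero constant term, so $t/f=(f/t)^{-1}\in\F_0$ is well defined; then $h=t^2/f\in\F_1$ and $d=1+\frac{t}{f}(1-g)\in\F_0$ with $d(0)=1$. Running the identical computation with the factors reversed shows this same $[d,h]$ satisfies $[d,h][g,f]=[1,t]$, so it is a genuine two-sided inverse in $q\cR$.

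The main obstacle is not any single hard step but the careful bookkeeping in two places: the index shift hidden in the $\frac{f}{t}$ factor of \eqref{1.12}, which must be tracked exactly when passing from columns $t^{k-1}h$ to columns $t^{k-1}\frac{fh}{t}$, and the verification that all constructed series remain in the correct classes $\F_0$ and $\F_1$ with the required normalizations. In particular, the existence of the inverse rests entirely on the invertibility of $f/t$ in the ring of formal power series, which is what guarantees that $t/f$ and $t^2/f$ are legitimate members of $\F_0$ and $\F_1$ respectively.
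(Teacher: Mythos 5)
Your proof is correct, but there is nothing in the paper to compare it against: the paper states this theorem as an imported result, citing \cite{He}, and gives no proof of its own. Your route is the natural one that the paper's framework suggests and is the standard argument in this literature: establish the FFTQRA \eqref{1.12} by reading off the column generating functions $g, f, tf, t^2f,\ldots$ of $[g,f]$, apply it column-by-column to $[d,h]$ to obtain the multiplication rule \eqref{1.11}, then verify closure, associativity (inherited from lower-triangular matrix multiplication), the identity $[1,t]$, and the explicit two-sided inverse $\left[1+\frac{t}{f}(1-g),\,\frac{t^2}{f}\right]$, whose existence rests on the invertibility of $f/t$ in $\F_0$. All of these steps check out. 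One minor imprecision worth fixing: you assert $d-1\in\F_1$, but by the paper's definition $\F_1$ consists of series whose coefficient of $t$ is \emph{nonzero}, and $d-1$ may have vanishing linear coefficient. What your argument actually uses is only that $d-1$ has zero constant term, so that $\frac{f}{t}(d-1)$ contributes nothing to the constant term of the first component; with that phrasing the closure argument is airtight.
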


In \cite{BHP}, it has been shown that the quasi-Riordan group is a normal subgroup of the almost-Riordan group. In authors' paper \cite{HS}, we discussed the total positivity of quasi-Riordan arrays. For instance, let $(g(t), f(t))$ be a Riordan array, where $g(t)=\sum_{n\geq 0} g_nt^n$ and $f(t)=\sum_{n\geq 1}f_nt^n$. If the lower triangular matrix 

\begin{equation}\label{eq:1}
\begin{array}{rl}
Q & =[g,f]=(g|f/t,t)=
\left [ \begin{array}{llllll} g_0& 0& 0& 0& 0& \cdots\\
g_1& f_1 & 0& 0& 0&\cdots\\
g_2 &f_2& f_1& 0& 0& \cdots\\
g_3& f_3& f_2& f_1&0&\cdots\\
\vdots &\vdots& \vdots& \vdots&\vdots&\ddots\end{array}\right]
\\
& =\left( g(t), f(t), tf(t), t^{2}f(t),\ldots\right)
\\
\end{array}
\end{equation}
is totally positive (TP), then so is $R=(g,f)$ (cf. \cite{MMW} or \cite{He, HS}). 
Other interesting criteria for total positivity of Riordan arrays can be found in \cite{CLW,CW}. 

In this paper we are going to focus on total positivity of almost-Riordan arrays and its connections with total positivity of Riordan and almost-Riordan arrays.

According to the definition of the almost-Riordan array, if an almost-Riordan array $(d|\,g,f)$ is TP, then the corresponding Riordan array $(g,f)$ shown in \eqref{1.14} must be TP. In other words, a necessary condition for the total positivity of an almost- Riordan array is that its corresponding Riordan array is TP. Therefore, the discussion of total positivity covers the discussion of total positivity of Riordan arrays, noting that research on the latter is still ongoing and attracts many researchers. However, in this paper we are interested in two questions regarding the total positivity of almost-Riordan arrays. The first is the conditions for finding $d$ in an almost Riordan array $(d|\, g,f)$ that has a corresponding TP Riordan array $(g,f)$ such that $(d|\, g ,f)$ is TP. The second is to determine the sequence characteristics of the TP almost-Riordan array. Let $(d|\, g,f)$ be the almost-Riordan array of $d_0=1$, then the generating function of its $A$-, $Z$- and $W$- sequences is given by the following formula (see \cite{AK})

\begin{align}
&A(t)=\frac{t}{\overline{f}(t)},\label{eq:A-Z_general-1}\\
&Z(t)=\frac{t(g(\overline{f}(t))-z_0d(\overline{f}(t))}{\overline{f}(t)g(\overline{f}(t)))}+z_0, \label{eq:A-Z_general-2}\\
&W(t)=\frac{t(d(\overline{f}(t))-d_0-w_0\overline{f}(t)d(\overline{f}(t)))}{(\overline{f}(t))^2g(\overline{f}(t))}+w_0,\label{eq:A-Z_general-3}
\end{align}
where $z_0=g_0$ and $w_0=d_1$. 

In particular, for the quasi-Riordan array $[d,g]=(d|\, g/t,t)$, the generating functions of its $A$-, $Z$-, and $W$-sequences are reduced to 

\begin{align}
&A(t)=1,\label{eq:A_Z_W-1}\\
&Z(t)=\frac{g(t)-z_0td(t)}{g(t)}+z_0, \label{eq:A_Z_W-2}\\ 
&W(t)=\frac{(1-w_0t)d(t)-1}{g(t)}+w_0,\label{eq:A_Z_W-3},
\end{align}

For instance, for quasi-Riordan array $[1/(1-t), t/(1-t)]$, $A(t)=Z(t)=W(t)=1$. 

The first problem will be discussed in Section $2$ is about the total positivity of almost-Riordan arrays $(d|\,g,f)$ for some special power series $d$ when the Riordan array $(g,f)$ is TP. More precisely, we give a necessary condition for the total positivity of an almost-Riordan array $(d|\, g,f)$ and sufficient conditions for the the total positivity of an almost-Riordan arrays $(d|\, g,f)$ when $d(t)=1, tg(t)+\alpha$, and $d_0+d_1t$, $\alpha, d_0, d_1\in {\bR}$, respectively. We also present a semidirect product of an almost-array and use it to transfer a total positivity problem of an almost-Riordan array to the total positivity problem of a quasi-Riordan array.  In Section \ref{Sec3}, we study the sequence characterization of TP almost-Riordan arrays. The production matrix $J$ of an almost-Riordan array $(d|\, g,f)$ is presented and the following general result is claimed: The $J$ is TP implies the total positivity of both the almost-Riordan array $(d|\, g,f)$ and the Riordan array $(g,f)$. We also use a counterexample to illustrate that $J$ being TP does not necessarily mean that its corresponding almost-Riordan array is TP.  If the production matrix $J$ is tridiagonal, then the expressions of its principal minors can be obtained. By using the expressions, we find a sufficient and necessary condition of the total positivity of almost-Riordan arrays with tridiagonal production matrices. In the remaining of Section \ref{Sec3}, numerous example are given to demonstrate our results.

\section{Total positivity of certain almost-Riordan arrays}\label{Sec2}

Let us consider two special cases for $d$ in a Riordan array and discuss its TP and the TP for the corresponding almost-Riordan array. 

Based on the discussion in the Introduction, we have

\begin{proposition}\label{pro:2.0}
If the almost-Riordan array $(d|\,g,f)$ is $TP$, then the Riordan array $(g,f)$ is $TP$. 
\end{proposition}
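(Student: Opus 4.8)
The plan is to read off the block decomposition \eqref{1.14}, which already exhibits the Riordan array $(g,f)$ as a submatrix of $(d|\,g,f)$. Writing the rows and columns with indices starting at $0$, equation \eqref{1.14} says that row $0$ of $(d|\,g,f)$ equals $d(0)$ in column $0$ and $0$ in every column $j\geq 1$, that column $0$ in rows $i\geq 1$ is filled by the coefficients of $(d-d(0))/t$, and that the entries in rows $i\geq 1$ and columns $j\geq 1$ agree exactly with the entries of the Riordan array $(g,f)$. So if I set $M=(d|\,g,f)$ and $R=(g,f)$, then $R_{i,j}=M_{i+1,j+1}$ for all $i,j\geq 0$; in other words, $R$ is the matrix obtained from $M$ by deleting the first row and the first column.

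Next I would invoke the elementary stability of total positivity under passing to submatrices. By definition, total positivity of $M$ means that for every choice of row indices $r_1<r_2<\cdots<r_k$ and column indices $c_1<c_2<\cdots<c_k$ the minor $\det\left[M_{r_a,c_b}\right]_{a,b=1}^{k}$ is nonnegative. Given an arbitrary minor of $R$ selected by rows $i_1<\cdots<i_k$ and columns $j_1<\cdots<j_k$, the identity $R_{i,j}=M_{i+1,j+1}$ shows that this minor is identical to the minor of $M$ selected by the shifted rows $i_1+1<\cdots<i_k+1$ and the shifted columns $j_1+1<\cdots<j_k+1$. Hence every minor of $R$ is literally a minor of $M$.

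The conclusion is then immediate: since $M$ is TP, the corresponding shifted minor is nonnegative, so the original minor of $R$ is nonnegative; as the minor was arbitrary, all minors of $R=(g,f)$ of every order are nonnegative, which is exactly the assertion that $(g,f)$ is TP. There is no genuine obstacle in this argument. The single point worth stating cleanly rather than belaboring is the routine fact that deleting a row and a column from a TP matrix yields a TP matrix, precisely because the minors of the resulting submatrix form a subfamily of the minors of the original matrix.
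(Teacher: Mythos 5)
Your proof is correct and follows the same route as the paper: the paper justifies Proposition \ref{pro:2.0} purely by the block form \eqref{1.14}, under which $(g,f)$ is the submatrix of $(d|\,g,f)$ obtained by deleting row $0$ and column $0$, so every minor of $(g,f)$ is a minor of $(d|\,g,f)$ and hence nonnegative. You have simply written out in full the submatrix-inheritance argument that the paper leaves implicit.
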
 

If $d(t)=1$, then from \eqref{2.8} we have 

\[
(d|\,g,f)=\left( \begin{matrix} 1 & 0\\ 0& (g,f)\end{matrix}\right). 
\]

Hence, one can observe

\begin{proposition}\label{pro:2.2}
The almost-Riordan array $(1|\,g,f)$ is $TP$ if and only if the Riordan array $(g,f)$ is $TP$. 
\end{proposition}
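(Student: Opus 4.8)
The plan is to exploit the block-diagonal form that $(1|\,g,f)$ inherits from \eqref{1.14}. Setting $d=1$ gives $d(0)=1$ and $(d-d(0))/t=0$, so the array collapses to the direct sum
\[
(1|\,g,f)=[1]\oplus(g,f)=\left(\begin{matrix}1&0\\0&(g,f)\end{matrix}\right),
\]
in the notation of \eqref{2.5}. The necessity direction is then immediate: $(g,f)$ sits as the submatrix of $(1|\,g,f)$ obtained by deleting the initial row and column, so every minor of $(g,f)$ is also a minor of $(1|\,g,f)$; hence total positivity of $(1|\,g,f)$ forces that of $(g,f)$. This is also just the special case $d=1$ of Proposition~\ref{pro:2.0}.

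For the sufficiency direction, I would assume $(g,f)$ is TP, set $M=[1]\oplus(g,f)$, and show that an arbitrary minor $\det M[I,J]$, indexed by a row set $I$ and a column set $J$ with $|I|=|J|$, is nonnegative by splitting into cases according to whether the index $0$ (the isolated first row and column) is selected. If $0\notin I$ and $0\notin J$, then after the shift $i\mapsto i-1$ the submatrix $M[I,J]$ is exactly a minor of $(g,f)$ and is nonnegative by hypothesis. If $0\in I$ but $0\notin J$, the entire first row of $M[I,J]$ consists of the zero entries $M_{0,j}=0$ for $j\geq 1$, so the minor vanishes; the symmetric case $0\in J$, $0\notin I$ forces a zero first column and again a vanishing minor. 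Finally, if $0\in I$ and $0\in J$, Laplace expansion along the first row, whose only nonzero entry is $M_{0,0}=1$, yields $\det M[I,J]=\det\bigl((g,f)[I\setminus\{0\},J\setminus\{0\}]\bigr)$ after the index shift, which is again nonnegative. Thus all minors of $M$ are nonnegative and $(1|\,g,f)$ is TP.

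Alternatively, I could invoke the general fact that a minor of a block-diagonal matrix either vanishes or factors as a product of one minor from each diagonal block; since $[1]$ is trivially TP and $(g,f)$ is TP by assumption, the direct sum is TP. The only point requiring care, the main obstacle, though a mild one here, is verifying that the isolated corner cannot create a sign: one must check precisely that any minor straddling the first row or column without selecting the corner entry $M_{0,0}$ is forced to vanish rather than contribute a negative term. This is exactly the content of the two middle cases above, and once it is settled the equivalence follows.
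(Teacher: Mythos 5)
Your proof is correct and follows exactly the paper's approach: the paper simply observes that $d=1$ makes $(1|\,g,f)=[1]\oplus(g,f)$ block-diagonal as in \eqref{2.5} and states the equivalence as an immediate consequence. Your case analysis on whether the index $0$ lies in the row and column sets just fills in the details the paper leaves implicit, and all four cases are handled correctly.
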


In particular, if $g$ and $f$ are P\'olya frequency power series, then the matrix $(g,f)$ is TP, which implies that $(1|\,g, f)$ is TP \cite{CW}.  

The following example shows there exists a non-TP almost-Riordan array $(d|\, g,f)$, in which $d$, $g$ and $f$ are P\'olya frequency.

\begin{example}
\label{ex:gf_not_TP}
Let $d(t)=(1+t)^2$, $g(t)=1/(1-t)$, and $f(t)=t$. Then $d(t)$, $g(t)$ and $f(t)$ are P\'olya frequency, and 
$$(g, f)=
\begin{bmatrix}
1&&&&&\\
1&1&&&&\\
1&1&1&&&\\
1&1&1&1&&\\
1&1&1&1&1&\\
\vdots&&&&&\ddots\\
\end{bmatrix}
$$
is TP. However, we have
$$(d|\,g,f)=[(1+t)^2, t/(1-t)]=
\begin{bmatrix}
1&&&&&\\
2&1&&&&\\
1&1&1&&&\\
0&1&1&1&&\\
0&&1&1&1&\\
\vdots&&&&&\ddots\\
\end{bmatrix}$$ 

whose minor 

\[
M^{1,2,3}_{0,1,2}=\det\left( \begin{array}{lll} 2&1&0\\1& 1&1\\0&1&1\end{array}\right)=-1<0.
\]
Hence, $(d|\,g,f)$ is not TP although $(g,f)$ is TP and $d$ is P\'olya frequency. 
\end{example}

The following example shows there exists a class of TP almost-Riordan arrays 
$(d|\, g,f)$, which $d=1+d_1t$ is P\'olya frequency, $g(t)= f(t)/t$,  
and $f(t)=f_1t+f_2t^2$. Since  

\[
(d|\, f/t,f)=[d,f]=\left[\begin{array}{llll} 1& 0 & & \\ d_1 & f_1 &0 & \\ & f_2& f_1 & \\ & & f_2 &\\  &   \ddots &\ddots &\ddots\end{array}
\right].
\]

Hence, from Proposition 2.6 of \cite[Proposition 2.6]{CLW},  
$(d|\, f/t,t)$is TP if and only if $f_1\geq 0$, which implies that $[1+d_1t, f_1t+f_2t^2]$ is TP if $d_1,f_1,f_2\geq 0$. 

From the above examples, we may see quasi-Riordan arrays as the elements of the normal subgroup of almost-Riordan group make important rule in our discussion.

As we have seen, the most interesting question here is whether having fixed $(g,f)$ can we choose $d$ in such way to ensure total positivity of $(d|g,f)$ and what, in such case, is the connection between $d$ and the pair $(g,f)$.  

For every Riordan array $(g,f)$ there exists $d$ such that the almost- Riordan array $(d|g,f)$ is TP. In fact, we have 

\begin{theorem}\label{thm:2.4}
Suppose $g\in\F_0$, $f\in\F_1$ such that the Riordan array $(g,f)$ is TP. Then the almost-Riordan array $\left(t\cdot g+\alpha|\,g,f\right)$ is also TP for every $\alpha>0$.
\end{theorem}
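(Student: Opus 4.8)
The plan is to write $(t g+\alpha\mid g,f)$ explicitly from \eqref{1.14} and then factor it as a product of two manifestly totally positive matrices. Set $A:=(tg+\alpha\mid g,f)$. Since $d=tg+\alpha$ gives $d(0)=\alpha$ and $(d-d(0))/t=g$, formula \eqref{1.14} yields
\[
A=\left(\begin{matrix}\alpha & 0\\ g & (g,f)\end{matrix}\right),
\]
where the entries of column $0$ below the corner are the coefficients of $g$. The crucial structural observation is that this same coefficient vector is also column $0$ of the Riordan array $(g,f)$; consequently column $0$ and column $1$ of $A$ agree in every row $\ge 1$ and differ only in the top entry ($\alpha$ in column $0$ versus $0$ in column $1$).

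Second, I would use this coincidence to strip off the corner with a single column operation. Let $E_{1,0}$ denote the matrix unit with a $1$ in row $1$, column $0$ (rows and columns indexed from $0$), and let $U=I-E_{1,0}$, the elementary matrix that subtracts column $1$ from column $0$. Then $AU$ has column $0$ equal to $(\alpha,0,0,\dots)^{T}$ while all other columns are unchanged, so that the lower-right block $(g,f)$ survives intact and
\[
AU=\left(\begin{matrix}\alpha & 0\\ 0 & (g,f)\end{matrix}\right)=[\alpha]\oplus(g,f).
\]
Equivalently $A=\big([\alpha]\oplus(g,f)\big)\,U^{-1}$ with $U^{-1}=I+E_{1,0}$.

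The conclusion then follows from three standard facts, which I would state and apply in turn. (i) A direct sum of TP matrices is TP: any minor of $[\alpha]\oplus(g,f)$ either vanishes (when the number of chosen rows in the first block differs from the number of chosen columns there) or factors as a product of a minor of $[\alpha]$ and a minor of $(g,f)$, all nonnegative since $\alpha>0$ and $(g,f)$ is TP by hypothesis. (ii) The elementary bidiagonal matrix $U^{-1}=I+E_{1,0}$ has nonnegative entries and is TP. (iii) A product of TP matrices is TP, via the Cauchy--Binet expansion of each minor as a nonnegative sum of products of minors of the factors. Combining (i)--(iii) shows that $A=\big([\alpha]\oplus(g,f)\big)U^{-1}$ is TP, which is the claim.

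I expect the only genuine care to lie in the first step, namely verifying that the lower-right block of $A$ is \emph{exactly} the Riordan array $(g,f)$ and that its $0$th column reproduces the coefficients of $g$ --- this is precisely what forces columns $0$ and $1$ of $A$ to coincide off the top row and makes the factorization available. Once this identification is made the factorization is forced and the remainder is routine; the infinite size causes no difficulty, since all matrices involved are lower triangular, so every minor and every Cauchy--Binet sum appearing above is finite. (I would also note in passing that $t g+\alpha$ has constant term $\alpha$, so strictly speaking it lies outside the normalization $d(0)=1$ of Definition~\ref{def:1.3}; this is harmless, as total positivity is a property of the matrix \eqref{1.14} itself.)
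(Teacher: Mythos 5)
Your proof is correct, but it reaches the conclusion by a genuinely different mechanism than the paper's. Both arguments rest on the same structural observation---that for $d=tg+\alpha$ the $0$th column of $(d|\,g,f)$ below the corner entry is exactly the coefficient vector of $g$, hence duplicates column $1$---but the paper exploits this by a direct case analysis of minors: a minor avoiding column $0$ is a minor of $(g,f)$; a minor containing row $0$ and column $0$ expands along row $0$ to $\alpha$ times a minor of $(g,f)$; and a minor containing column $0$ but not row $0$ either contains both duplicated columns (hence vanishes) or is again a minor of $(g,f)$. You instead convert the duplication into the explicit factorization $(tg+\alpha|\,g,f)=\bigl([\alpha]\oplus(g,f)\bigr)\bigl(I+E_{1,0}\bigr)$ and then invoke three standard closure facts: direct sums of TP matrices are TP, the elementary bidiagonal matrix $I+E_{1,0}$ is TP, and products of TP matrices are TP by Cauchy--Binet (the sums having only finitely many nonzero terms here because all factors are lower triangular). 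The paper's route is entirely self-contained, requiring nothing beyond the hypothesis on $(g,f)$; yours is more modular and arguably more transparent: the factorization isolates exactly where $\alpha>0$ and the total positivity of $(g,f)$ enter (and shows $\alpha\geq 0$ suffices, as the paper's argument also implicitly does), it meshes naturally with the semidirect-product viewpoint of Proposition \ref{thm:2.5}, and it generalizes at once to any lower triangular matrix whose columns $0$ and $1$ agree off the top row. Your closing caveat about $d(0)=\alpha$ violating the normalization $d(0)=1$ of Definition \ref{def:1.3} applies equally to the paper's own statement and is treated there the same way you treat it: total positivity is read off the matrix itself.
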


\begin{proof}
Observe that the Riordan array $\left(t\cdot g+\alpha|\,g,f\right)=R$ can be written in the partitioned form
\[
R=\left[
\begin{array}{c|c}
\alpha & 0 \\
\hline 
g & (g,f) \\
\end{array}
\right].
\]
We now consider its minors.
\begin{enumerate}
\item 
If a minor of $R$ does not contain the column $0$, then it is simply a minor of $(g,f)$, so it is nonnegative.
\item 
Suppose now that a minor of $R$ contains the column $0$.
\begin{enumerate}
\item 
If it contains the row $0$, then expanding along it, we get a minor of $(g,f)$ multiplied by $\alpha$ which is a nonnegative number.
\item 
If this minor does not contain the row $0$, then it is a minor of the partitioned matrix 
\[
\tilde R=\left[\begin{array}{c|c|c}
g_0 & g_0 & 0 \\
\hline
\frac{g-g_0}t & \frac{g-g_0}t & \left(\frac{gf}t,f\right)\\
\end{array}\right].
\]
\begin{enumerate}
\item 
Now, if minor contains both the two first columns of $\tilde R$, then it is equal to $0$.  
\item 
If the minor contains one or none of the first two columns of $\tilde R$, then it is simply one of the minors of $(g,f)$, so it is nonnegative. 
\end{enumerate}
\end{enumerate}
\end{enumerate}
\end{proof}

\par 
Moreover, for every TP array $(g,f)$ there exist $d$ that does not depend on $g$ and $f$ such that $(d|g,f)$ is TP.

\begin{theorem}\label{thm:2.4}
Let $g\in\F_0$, $f\in\F_1$ and let $d=d_0+d_1t$ with $d_0,d_1\geqslant 0$. If the Riordan array $(g,f)$ is TP, then $(d_0+d_1t|\,g,f)$ is also TP.
\end{theorem}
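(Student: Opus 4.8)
The plan is to mimic the minor-by-minor case analysis used in the proof of the preceding theorem, now exploiting the fact that the special form $d=d_0+d_1t$ renders the zeroth column of the array extremely sparse. By \eqref{1.14}, since $d(0)=d_0$ and $(d-d_0)/t=d_1$, the array can be written in partitioned form as
\[
M=(d_0+d_1t|\,g,f)=\left[\begin{array}{c|c} d_0 & 0 \\ \hline (d_1,0,0,\ldots)^{T} & (g,f)\end{array}\right],
\]
so that the only nonzero entry of column $0$ below the top-left corner is $M_{1,0}=d_1$. Explicitly, $M_{0,0}=d_0$, $M_{1,0}=d_1$, $M_{i,0}=0$ for $i\geq 2$, $M_{0,j}=0$ for $j\geq 1$, and $M_{i,j}=(g,f)_{i-1,j-1}$ for $i,j\geq 1$. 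The whole argument will rest on the nonnegativity of the minors of $(g,f)$ together with $d_0,d_1\geq 0$.

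I would then fix an arbitrary square minor with row set $R$ and column set $C$ of equal size and split into cases. If $C$ omits column $0$, then either $R$ contains row $0$, in which case the minor vanishes because row $0$ is zero off column $0$, or $R$ omits it and the minor is a minor of $(g,f)$, hence nonnegative by hypothesis. If $C$ contains column $0$ and $R$ contains row $0$, expanding along row $0$ (whose only nonzero entry is $d_0$) gives $d_0\geq 0$ times a minor of $(g,f)$. The remaining case is that $C$ contains column $0$ while $R$ omits row $0$: here column $0$ restricted to the chosen rows is zero unless the smallest chosen row is $r_1=1$, so the minor is $0$ when $r_1\geq 2$, and when $r_1=1$ the single entry $d_1$ sits at the top-left of the selected submatrix, and expanding along column $0$ produces $d_1\geq 0$ times the determinant on rows $\{r_2,\ldots\}\subseteq\{2,3,\ldots\}$ and columns $\{c_2,\ldots\}\subseteq\{1,2,\ldots\}$, which after the shift $(i,j)\mapsto(i-1,j-1)$ is a genuine minor of $(g,f)$.

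The one point demanding care — and where I expect the argument to be most delicate — is the sign bookkeeping in this final case: I must check that because column $0$ and row $1$ are respectively the first selected column and the first selected row, the cofactor sign of the $d_1$-entry is $(-1)^{1+1}=+1$, so no spurious negative sign is introduced and the minor really equals $d_1$ times a nonnegative minor of $(g,f)$; the same remark applies to the $d_0$-expansion along row $0$. Everything else reduces mechanically to the total positivity of $(g,f)$ and the signs of $d_0$ and $d_1$. Compared with the preceding theorem, this case analysis is in fact lighter, since the term $d_1t$ deposits only a single nonzero entry in column $0$ rather than an entire shifted copy of a power series.
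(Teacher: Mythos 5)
Your proof is correct and takes essentially the same route as the paper's own proof: write the array in partitioned form, classify each minor according to whether it contains column $0$, row $0$, and row $1$, then expand along row $0$ (giving $d_0$ times a minor of $(g,f)$) or along column $0$ (giving $d_1$ times a minor of $(g,f)$), the remaining cases vanishing because the relevant part of column $0$ or row $0$ is zero. Your explicit handling of the subcase where the minor contains row $0$ but omits column $0$ (which vanishes) and of the cofactor signs is slightly more careful than the paper's write-up, but it is the same argument.
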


\begin{proof}
Let 
\[
R=(d_0+d_1t|\,g,f)=\left[
\begin{array}{c|c}
d_0 & 0 \\
\hline 
d_1 &  \\
0 & (g,f) \\
\vdots &  \\
\end{array}
\right].
\]
Consider the minors of $R$.
\begin{enumerate}
\item If a minor does not contain the column $0$, then it is a minor of $(g,f)$ that, by the assumption, is nonnegative. 
\item Suppose now that a minor contains the column $0$. 
\begin{enumerate}
\item 
If it contains the row $0$, then, after expansion along this row, one gets a minor of $(g,f)$ multiplied by $d_0$ which is a nonnegative number.
\item 
If it does not contain the row $0$, but it contains the first row, then it is a minor of 
\[
\tilde R=\left[\begin{array}{c|c|c}
d_1 & g_0 & 0 \\
\hline
0 & \frac{g-g_0}t & \left(\frac{gf}t,f\right)\\
\end{array}\right].
\]
Thus, expanding now along the column $0$, we get a minor of $(g,f)$ multiplied by $d_1$ that is nonnegative.
\item 
If it does not contain neither the zeroth nor the first row, then it is a minor of 
\[
\tilde R=\left[\begin{array}{c|c|c}
0 & \frac{g-g_0}t & \left(\frac{gf}t,f\right)\\
\end{array}\right]
\]
and it is equal to $0$.
\end{enumerate}
\end{enumerate}
\end{proof}

It is known that every Riordan array $(g,f)$ can be written as the semidirect product 
\[
(g,f)=(g,t)(1,f).
\]

Consequently, we have the following result. 

\begin{proposition}\label{thm:2.5}
Every almost-Riordan array $(d|\,g,f)$ can be written as the semidirect product 

\begin{equation}\label{eq:R_factoriz-1}
(d|\, g,f)=[d,tg](1|\,1,f),
\end{equation}
or equivalently, 

\begin{equation}
\label{eq:R_factoriz}
(d|\,g,f)=\left[\begin{array}{c|c}
d_0 & 0  \\
\hline 
\frac{d-d_0}{t}&  (g,t)\\
\end{array}\right]
\left[\begin{array}{c|c}
1 & 0\\
\hline 
0 & (1,f)\\
\end{array}\right].
\end{equation}
\end{proposition}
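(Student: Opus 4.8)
The plan is to obtain the factorization directly from the two multiplication laws already available, rather than manipulating infinite matrices entry by entry. The guiding idea is that the claimed identity is the almost-Riordan analogue of the Riordan factorization $(g,f)=(g,t)(1,f)$ quoted just before the statement, so I would try to realize both factors as elements of $a\cR$ and then simply apply \eqref{1.15}.

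First I would recast the quasi-Riordan factor as an almost-Riordan array. By the identity $[g,f]=(g|\,f/t,t)$ recorded in Definition~\ref{def:2.3}, we have $[d,tg]=(d|\,(tg)/t,t)=(d|\,g,t)$. The standing hypotheses $d(0)=g(0)=1$ guarantee that this is a legitimate almost-Riordan array and that $tg\in\F_1$, so both factors on the right-hand side of \eqref{eq:R_factoriz-1} lie in the almost-Riordan group and their product is governed by \eqref{1.15}.

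Next I would apply \eqref{1.15} to $(d|\,g,t)(1|\,1,f)$. Matching the first factor to the pattern $(a|\,g,f)$ gives $a=d$, inner series $g$, and composition series $t$; matching the second factor to $(b|\,d,h)$ gives $b=1$, inner series $1$, and composition series $f$. The crucial observation is that the second factor is built from the constant series $1$, so that $b(t)-1=0$ annihilates the correction term $\frac{tg}{t}\bigl(b(t)-1\bigr)$, leaving the first slot equal to $d$; the inner slot becomes $g\cdot 1(t)=g$; and the composition slot becomes $f(t)=f$. Hence $(d|\,g,t)(1|\,1,f)=(d|\,g,f)$, which is exactly \eqref{eq:R_factoriz-1}.

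Finally, for the equivalent block form \eqref{eq:R_factoriz}, I would note that the two displayed partitioned matrices are precisely the representations \eqref{2.8} and \eqref{1.14}: specializing \eqref{2.8} with $g\mapsto d$ and $f\mapsto tg$ produces the lower-right block $((tg)/t,t)=(g,t)$, while \eqref{1.14} with $d\mapsto 1$, $g\mapsto 1$ produces the block $(1,f)$. Performing the block multiplication, the scalar corner is $d_0$, the first column beneath it is $(d-d_0)/t$, and the lower-right block is the Riordan product $(g,t)(1,f)=(g,f)$; reassembling via \eqref{1.14} recovers $(d|\,g,f)$. I do not expect a genuine obstacle here, since the whole argument is bookkeeping; the only point demanding care is correctly matching the three slots of the first factor to the variables in \eqref{1.15} so that the cancellation $b(f)-1=0$ is transparent.
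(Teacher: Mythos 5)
Your proof is correct, and it effectively contains two arguments, one of which is the paper's own. The paper's entire proof is the single sentence that the claim follows from the Riordan factorization $(g,f)=(g,t)(1,f)$ quoted just before the proposition; your second, block-form verification --- writing $[d,tg]$ via \eqref{2.8} as the partitioned matrix with corner $d_0$, first column $(d-d_0)/t$, and lower-right block $((tg)/t,t)=(g,t)$, writing $(1|\,1,f)$ via \eqref{1.14} as the direct sum of $[1]$ with $(1,f)$, and multiplying blocks so that the lower-right block becomes $(g,t)(1,f)=(g,f)$ --- is precisely that argument made explicit. Your lead argument is genuinely different and self-contained: identifying $[d,tg]=(d|\,g,t)$ through the identity $[g,f]=(g|\,f/t,t)$ of Definition~\ref{def:2.3}, you compute $(d|\,g,t)(1|\,1,f)$ inside the group $a{\cR}$ by the multiplication law \eqref{1.15}; with $a=d$, first-factor composition slot $t$, and $b=1$, the correction term $\frac{tg}{t}\bigl(b(t)-1\bigr)$ vanishes, the inner slot is $g\cdot 1=g$, and the composition slot is $f\circ t=f$. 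Your slot-matching (all compositions are taken with the first factor's third entry, here $t$) is exactly right, and the standing hypotheses $d(0)=g(0)=1$, $f'(0)=1$ do guarantee both factors lie in $a{\cR}$. What each approach buys: the paper's route is shortest, leaning on a known Riordan identity plus block bookkeeping, while yours shows the factorization is forced purely by the group structure of $a{\cR}$, needs no infinite-matrix manipulation, and recovers the block identity \eqref{eq:R_factoriz} as a corollary of \eqref{eq:R_factoriz-1} rather than as a separate computation.
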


\begin{proof}
From the factorization of Riordan arrays shown above, we may prove the theorem.
\end{proof}

Clearly, the total positivity of the two matrices from the above decomposition ensures the total positivity of $R$. Suppose that both $g$ and $f$ are P\'olya frequency formal power series. Then, from \cite{CW}, we know that $(g,f)$ is TP. Thus, to obtain $d$ from $(\ref{eq:R_factoriz})$ such that $(d|\,g,f)$ is TP, we only need to focus on $d$. 

Summing up, we get the following 

\begin{corollary}
Let $d,g\in\F_0$, $f\in\F_1$. If $f$ is a P\'olya frequency formal power series and the quasi-Riordan array 
$[d, tg]$ is TP, then the almost-Riordan array $(d|\,g,f)$ is also TP.
\end{corollary}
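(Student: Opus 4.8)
The corollary follows directly from combining Proposition~\ref{thm:2.5} with the necessary-condition result on Riordan arrays, so the plan is to assemble these pieces rather than to perform any new computation. First I would invoke the semidirect-product factorization \eqref{eq:R_factoriz-1}, namely $(d|\,g,f)=[d,tg](1|\,1,f)$, which expresses the almost-Riordan array as a product of two lower-triangular matrices. The key observation is that total positivity is preserved under matrix multiplication: if two totally positive matrices are multiplied, the product is again totally positive. This is the classical Cauchy--Binet fact that the minors of a product are nonnegative sums of products of minors of the factors, and it is exactly the principle already used implicitly when the excerpt remarks that \emph{the total positivity of the two matrices from the above decomposition ensures the total positivity of $R$}.

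Next I would verify that each of the two factors is totally positive under the stated hypotheses. For the second factor, $(1|\,1,f)$ equals $[1]\oplus(1,f)$ by \eqref{1.14}, so its total positivity reduces to the total positivity of the Riordan array $(1,f)$; and since $f$ is assumed to be a P\'olya frequency formal power series, the cited result from \cite{CW} guarantees that $(1,f)$ is TP. (A single $1$ adjoined as a new top-left corner introduces no new sign among the minors, so the direct sum is TP exactly when $(1,f)$ is.) For the first factor, the hypothesis that the quasi-Riordan array $[d,tg]$ is TP is given outright, so no further work is needed there.

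With both factors totally positive, the multiplicativity of total positivity yields immediately that the product $[d,tg](1|\,1,f)=(d|\,g,f)$ is TP, completing the argument. I do not anticipate a genuine obstacle here; the only point demanding a little care is the bookkeeping in the second factor, where one must confirm that the operation of forming $(1|\,1,f)$ really is the direct sum $[1]\oplus(1,f)$ (so that P\'olya frequency of $f$ suffices) rather than something that mixes the adjoined corner with the body of the Riordan array. This is settled directly by specializing \eqref{1.14} to $d=1$, $g=1$. If one wished to make the multiplicativity step fully self-contained rather than citing it, the Cauchy--Binet expansion of an arbitrary minor of the product into a sum of products of minors of the two TP factors would supply the details, but given the framing of the excerpt I would simply cite the preservation of TP under products and keep the proof short.
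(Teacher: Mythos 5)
Your proof is correct and follows essentially the same route as the paper: the paper derives this corollary by ``summing up'' the factorization $(d|\,g,f)=[d,tg](1|\,1,f)$ of Proposition~\ref{thm:2.5}, the multiplicativity of total positivity under matrix products, and the fact from \cite{CW} that $f$ being P\'olya frequency makes the second factor $[1]\oplus(1,f)$ totally positive. Your write-up merely makes explicit (via Cauchy--Binet and the specialization of \eqref{1.14} to $d=g=1$) the steps the paper leaves implicit, so there is nothing to correct.
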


Thus, if $f$ is an arbitrary P\'olya frequency formal power series, then in order to construct totally positive  almost-Riordan array $(d|\,g,f)$ it suffices to discuss the total positivity of the quasi-Riordan array $[d,tg]$.   

\begin{example}
Let us consider $g(t)=\frac 1{1-\alpha t}$ and  $d(t)=\displaystyle\sum_{n=0}^\infty d_nt^n$ in $[d,tg]$, where $tg(t)$ is a P\'olya frequency formal power series. The quasi-Riordan array $[d,tg]$ is a special case of an almost-Riordan array. 
From \cite{HS} we know that if its production matrix 
\[
J=\left[ \begin{array}{llllll} w_0& z_0& & & &  \\
w_1& z_1& 1 & & & \\ 
w_2& z_2& 0 & 1 & & \\
w_3& z_3& 0 & 0 & 1 &\\
\vdots &\vdots &\vdots &\vdots& \vdots &\ddots
\end{array}\right]
\]
is TP, where $Z(t)=\sum_{n\geq 0} z_n t^n$ and $W(t)=\sum_{n\geq 0} w_nt^n$ are represented in \eqref{eq:A_Z_W-2} and \eqref{eq:A_Z_W-3}, 
then $[d,g]$ is also TP. 

Moreover, also from \cite[Theorem 3.4]{HS}, we know that $J$ is TP if and only if $Z(t)=z_0+z_1t$, $W(t)=w_0+w_1t$, and $w_0z_1-w_1z_0\geqslant 0$. For the given $g=1/(1-\alpha t)$, from \eqref{eq:A-Z_general-2} and \eqref{eq:A-Z_general-3}, we have 

\begin{align*}
&Z(t)=z_0+z_1t=\frac{\frac{t}{1-\alpha t}-z_0td(t)}{\frac{t}{1-\alpha t}}+z_0,\quad \textrm{and} \\
&W(t)=w_0+w_1t=\frac{(1-w_0t)d(t)-1}{\frac{t}{1-\alpha t}}+w_0.
\end{align*}

Assume $z_0=1$. The above system has solution 
\[d(t)=\frac{1-z_1t}{1-\alpha t}\]
if
 
\[
w_0=\alpha -z_1 \quad\textrm{and}\quad w_1=z_1(\alpha -z_1).
\] 

Denote $z_1=\beta$, those conditions come down to 

\[
Z(t)=1+\beta t,\,\, W(t)=\alpha-\beta+(\alpha\beta-\beta^2)t, \,\, \mbox{and}\,\, d(t)=\frac{1-\beta t}{1-\alpha t}
\]

Since $w_0,w_1\geq 0$, we obtain $0\leq\beta\leq \alpha$. Note that the necessary and sufficient condition of the total positivity of quasi-Riordan array shown in \cite[Theorem 3.4]{HS} is satisfied, i.e. 
\[
w_0z_1-w_1z_0=w_0\beta-w_1=(\alpha-\beta)\beta-(\alpha\beta-\beta^2)=0\,\,\textrm{for all } \alpha,\beta.
\]

Consequently,  for any $\beta\in\left[0,\alpha\right]$ and any P\'olya frequency formal power series $f$, the almost-Riordan array $\left(\frac{1-\beta t}{1-\alpha t}|\frac 1{1-\alpha t},f(t)\right)$ is TP.
\end{example}

Similarly as in the previous example, if we replace $g(t)$ to be $\alpha$, then $J$ is TP only if $w_0= z_1= w_1=0$, and $\alpha=d(t)=1$ when $z_0=1$.

\section{Sequence Characterization of the TP Almost-Riordan Arrays}\label{Sec3}

We now discuss the sequence characterization of the TP almost-Riordan arrays. We start from a general case of the characteristic sequence with the generating functions shown in \eqref{eq:A-Z_general-1}-\eqref{eq:A-Z_general-3}.

\subsection{General result}

We call the following matrix the {\it production matrix} of the almost-Riordan matrix $[g,f]$:

\begin{equation}\label{eq:2}
J=\left[ \begin{array}{llllll} w_0& z_0& & & &  \\
w_1& z_1& a_0& & & \\ w_2& z_2& a_1& a_0 & & \\
w_3& z_3& a_2& a_1& a_0 &\\
\vdots &\vdots &\vdots &\vdots& \vdots &\ddots
\end{array}\right]
\end{equation} 

It can be checked that 

\begin{equation}\label{eq:3}
(d|g,f)J=\overline{(d|g,f)},
\end{equation}
where $\overline{(d|\,g,f)}$ is the matrix by deleting the first row of $(d|\,g,f)$.

\begin{proposition}\label{pro:3.1}
Let $(d|\,g,f)$ and $J$ be two matrices defined by \eqref{2.6} and \eqref{eq:2}, respectively. If $J$ is TP, then so are $(d|\,g,f)$ and $(g,f)$. 
\end{proposition}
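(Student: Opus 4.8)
The plan is to extract from the production identity \eqref{eq:3} a row recurrence for $R:=(d|\,g,f)$, and then to prove by induction that every minor of $R$ is nonnegative, using the total positivity of $J$ together with the Cauchy--Binet formula. The total positivity of $(g,f)$ will then be immediate from Proposition \ref{pro:2.0}.

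First I would read \eqref{eq:3} row by row. Writing $r_n$ for the $n$-th row of $R$, the equation $RJ=\overline R$ says precisely that $r_{n+1}=r_nJ$ for every $n\geq 0$, since the $n$-th row of $\overline R$ is $r_{n+1}$ while the $n$-th row of $RJ$ is $r_nJ$. Because $d(0)=1$, the top row is $r_0=(1,0,0,\ldots)$, and hence $r_n$ is the zeroth row of $J^n$, i.e.\ $R_{n,k}=(J^n)_{0,k}$. In particular every entry of $R$ is a finite sum of products of entries of $J$; the sum is finite because $J$ is lower Hessenberg, so a length-$n$ walk starting at $0$ cannot reach a column beyond $n$, and each such entry is nonnegative because $J$ is TP. This finiteness also legitimizes every Cauchy--Binet expansion used below.

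The heart of the argument is a double induction showing that every minor $M=\det R[i_1,\ldots,i_k;\,j_1,\ldots,j_k]$ with $i_1<\cdots<i_k$ and $j_1<\cdots<j_k$ is nonnegative, the induction running on the order $k$ and, for fixed $k$, on the row-index sum $\sum_s i_s$. If $i_1=0$, the top row of the minor is the restriction of $(1,0,0,\ldots)$ to the chosen columns: when $j_1>0$ it is the zero row and $M=0$, while when $j_1=0$, expanding along it reduces $M$ to a minor of order $k-1$, nonnegative by the induction on $k$. If instead $i_1\geq 1$, then each selected row satisfies $r_{i_s}=r_{i_s-1}J$, so the chosen row-block of $R$ equals the row-block with indices $i_1-1<\cdots<i_k-1$ multiplied by $J$; Cauchy--Binet then yields
\[
M=\sum_{l_1<\cdots<l_k}\det R[i_1-1,\ldots,i_k-1;\,l_1,\ldots,l_k]\,\det J[l_1,\ldots,l_k;\,j_1,\ldots,j_k].
\]
Here each factor $\det J[\,\cdots]\geq 0$ since $J$ is TP, while each $\det R[i_1-1,\ldots,i_k-1;\,\cdots]$ is a minor of the same order $k$ with strictly smaller row-index sum (decreased by $k$) and still nonnegative row indices, hence nonnegative by the inner induction. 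Thus $M\geq 0$; repeatedly applying the $i_1\geq 1$ step drives $i_1$ down to $0$, where the $k$-induction closes the argument, and $R=(d|\,g,f)$ is TP.

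Once $(d|\,g,f)$ is shown to be TP, the total positivity of $(g,f)$ follows at once from Proposition \ref{pro:2.0}. The main obstacle I expect is organizing the induction cleanly: choosing the pair $\bigl(k,\sum_s i_s\bigr)$ ordered lexicographically as the well-founded parameter and verifying that the two reductions---expansion along the first row and the Cauchy--Binet step---each strictly decrease it while keeping the row indices nonnegative. A secondary point to handle carefully is the justification of Cauchy--Binet for these infinite matrices, which is harmless here because the triangular/Hessenberg structure renders every such expansion a finite sum.
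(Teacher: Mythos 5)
Your proof is correct and takes exactly the route the paper indicates: the paper's entire proof is the sentence that the claim ``can be proved by using \eqref{eq:3} and mathematical induction,'' and your argument—reading \eqref{eq:3} as the row recurrence $r_{n+1}=r_nJ$, then running a double induction on minor order and row-index sum with Cauchy--Binet supplying the nonnegativity at each step, and finally invoking Proposition \ref{pro:2.0} for $(g,f)$—is a careful realization of precisely that plan. You have simply supplied the details (including the finiteness justification for Cauchy--Binet) that the paper omits.
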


\begin{proof}
It can be proved by using \eqref{eq:3} and mathematical induction.
\end{proof}

\subsection{The tridiagonal case}

Let us consider some special form of $J$. 
As from Thm.4.3 \cite{Pin} we know that a Jacobi matrix

\[
\left[\begin{matrix}
a_1 & b_1 & & &  & \\
c_1 & a_2 & b_2 & & & \\
 & c_2 & a_3 & b_3 & & \\
 & & \ddots & \ddots & \ddots & \\
 & & & & c_{n-1} & a_n\\
\end{matrix}\right]
\]
is TP if and only if its all elements and its all principal minors containing consecutive rows and columns are nonnegative, we conclude that in order to check total positivity of production matrix of the tridiagonal form 

\[
J=\left[\begin{matrix}
w_0 & z_0 & & & & & \\
w_1 & z_1 & a_0 & & & & \\
 & z_2 & a_1 & a_0 & & & \\
 & & a_2 & a_1 & a_0 & & \\
 & & & a_2 & a_1 & a_0 & \\
 & & & & \ddots & \ddots & \ddots\\
\end{matrix}\right]
\]
it suffices to check the principal minors. 
Principal minor $J_n$ of degree $n$ of the above $J$ is equal to 
\begin{equation}
\label{eq:det(J_n)}
\det(J_n)=\begin{cases}
w_0 & \textrm{for }n=1\\
w_0z_1-z_0w_1 & \textrm{for }n=2\\
w_0z_1a_1-w_0z_2a_0-w_1z_0a_1 & \textrm{for }n=3\\
(w_0z_1-w_1z_0)\det(T_{n-2})-w_0z_2a_0\det(T_{n-3}) & \textrm{for }n>3,\\
\end{cases}
\end{equation}
where $T_n$ stands for the $n\times n$ Toeplitz matrix 

\begin{equation}
\label{eq:T_n}
T_n=\left[\begin{matrix}
a_1 & a_0 & & & & \\
a_2 & a_1 & a_0 & & & \\
 & a_2 & a_1 & a_0 & & \\
 & & \ddots & \ddots & \ddots & \\
 & & & a_2 & a_1 & a_0 \\
 & & & & a_2 & a_1 \\
\end{matrix}\right],
\end{equation}

whose determinant can be found by using its expansion along the first column: 

\begin{equation}
\label{eq:det(T_n)}
\begin{array}{rl}
\det(T_n) & =a_1\det(T_{n-1})-a_0a_2\det(T_{n-2})
\\
& =
\begin{cases}
\frac 1{\sqrt{a_1^2-4a_0a_2}}\left[\left(\frac{a_1+\sqrt{a_1^2-4a_0a_2}}{2}\right)^{n+1}-
\left(\frac{a_1-\sqrt{a_1^2-4a_0a_2}}{2}\right)^{n+1}\right] 
\\
\hspace{6cm}\textrm{ if } a_1^2-4a_0a_2\neq 0\\
(n+1)\left(\frac{a_1}2\right)^n \\
\hspace{6cm}\textrm{ if } a_1^2-4a_0a_2=0,\\
\end{cases}
\\
\end{array}
\end{equation}
where the numbers $\frac{a_1+\sqrt{a_1^2-4a_0a_2}}{2}$, $\frac{a_1-\sqrt{a_1^2-4a_0a_2}}{2}$, and $\frac{a_1}2$ are, in fact, the roots of the characteristic equation

\[
x^2-a_1x+a_0a_2=0
\]
of the determinant of $T_n$. We assume here that $\sqrt{a_1^2-4a_0a_2}$ is the principal root of $a_1^2-4a_0a_2$. Note that, from the recurrence relation, it follows that the determinant given by $(\ref{eq:det(T_n)})$ is always real.

Let us discuss first the case when the characteristic equation has a single root. 

\begin{theorem}
\label{thm:one_root}
Suppose that $a_0,a_1,a_2,z_0,z_1,z_2,w_0,w_1\geqslant 0$ and $a_1^2=4a_0a_2$. 
If the numbers
\[
w_0z_1-w_1z_0, \quad \mbox{and} \quad w_0z_1a_1-w_1z_0a_1-2w_0z_2a_0
\]
are nonnegative, then the almost-Riordan array $(d|g,f)$ defined by its sequence characterization shown in Eqs. \eqref{eq:A-Z_general-1}-\eqref{eq:A-Z_general-3} is TP.
\end{theorem}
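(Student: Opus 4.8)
The plan is to deduce total positivity of $(d|g,f)$ from total positivity of its production matrix $J$ via Proposition~\ref{pro:3.1}; thus the whole task reduces to showing that the tridiagonal $J$ is TP. Since all of $a_0,a_1,a_2,z_0,z_1,z_2,w_0,w_1$ are assumed nonnegative, every entry of $J$ is nonnegative, so by the Jacobi-matrix criterion (Thm.~4.3 of \cite{Pin}) it suffices to verify that every principal minor on a block of consecutive indices is nonnegative.

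First I would classify these contiguous principal minors into three families according to their starting index: (i) the leading minors $\det(J_n)$ on $\{0,\ldots,n-1\}$, given in \eqref{eq:det(J_n)}; (ii) the minors $S_k$ on $\{1,\ldots,k\}$, whose first row and column carry the entries $z_1,z_2$; and (iii) the minors on $\{i,\ldots,j\}$ with $i\geq 2$, which are exactly the Toeplitz determinants $\det(T_m)$ with $m=j-i+1$. Family (iii) is immediate: the single-root hypothesis $a_1^2=4a_0a_2$ together with \eqref{eq:det(T_n)} gives $\det(T_m)=(m+1)(a_1/2)^m\geq 0$ because $a_1\geq 0$.

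For family (i), I would substitute $\det(T_{n-2})=(n-1)(a_1/2)^{n-2}$ and $\det(T_{n-3})=(n-2)(a_1/2)^{n-3}$ into the recurrence in \eqref{eq:det(J_n)}, factor out $(a_1/2)^{n-3}\geq 0$, and reduce the question to the linear-in-$n$ inequality
\[
(w_0z_1-w_1z_0)(n-1)a_1-2w_0z_2a_0(n-2)\geq 0.
\]
Writing $P=w_0z_1-w_1z_0$ and $Q=Pa_1-2w_0z_2a_0$ for the two hypothesized nonnegative quantities, the second hypothesis reads $2w_0z_2a_0\leq Pa_1$, so $2w_0z_2a_0(n-2)\leq Pa_1(n-2)\leq Pa_1(n-1)$, which is exactly the required inequality; the small cases $n\leq 3$ follow directly, noting $\det(J_3)=Q+w_0z_2a_0\geq 0$.

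Family (ii) is where the extra care is needed, since the $S_k$ are not among the formulas in \eqref{eq:det(J_n)}. Expanding $S_k$ along its first row yields $S_k=z_1\det(T_{k-1})-a_0z_2\det(T_{k-2})$, and substituting the single-root formula reduces nonnegativity to $kz_1a_1-2(k-1)a_0z_2\geq 0$. When $w_0>0$ the second hypothesis gives $z_1a_1\geq 2z_2a_0$ after dividing $Q\geq 0$ by $w_0$ (and discarding the nonnegative term $w_1z_0a_1$), whence $kz_1a_1-2(k-1)a_0z_2\geq 2a_0z_2\geq 0$. The main obstacle I anticipate is the bookkeeping for this second family together with the degenerate cases: when $w_0=0$ both hypotheses force all leading minors to vanish and the estimate for $S_k$ must be argued by other means (or from the structure forced by $w_1z_0=0$), and the subcase $a_1=0$ (forcing $a_0a_2=0$) must be checked separately. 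Once all three families are shown nonnegative, $J$ is TP and Proposition~\ref{pro:3.1} finishes the proof.
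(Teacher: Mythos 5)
Your overall route is the same as the paper's: reduce to total positivity of the tridiagonal production matrix $J$ via Proposition \ref{pro:3.1}, invoke the Jacobi-matrix criterion from \cite{Pin}, and evaluate the contiguous principal minors through the closed forms \eqref{eq:det(J_n)} and \eqref{eq:det(T_n)}. Your families (i) and (iii) reproduce the published argument, and your version of (i) is in fact cleaner, since the paper's inequality $\frac{a_1(w_0z_1-w_1z_0)}{2w_0z_2a_0}\geqslant\frac{n-2}{n-1}$ silently divides by $2w_0z_2a_0$, which may vanish. Your family (ii) --- the minors on index sets $\{1,\dots,k\}$, with $S_k=z_1\det(T_{k-1})-a_0z_2\det(T_{k-2})$ --- is genuinely absent from the paper's proof, which checks only the leading minors $\det(J_n)$ and the Toeplitz blocks $\det(T_m)$, even though the quoted criterion requires \emph{all} principal minors on consecutive rows and columns. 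On this point you are more careful than the source.

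However, the obstacle you flag at $w_0=0$ is not mere bookkeeping: it is a genuine gap, and it cannot be closed ``by other means,'' because the statement itself fails there. Take
\[
a_0=a_2=z_0=z_2=1,\qquad a_1=2,\qquad z_1=w_0=w_1=0,
\]
that is, $A(t)=(1+t)^2$, $Z(t)=1+t^2$, $W(t)=0$. All hypotheses of the theorem hold (both displayed quantities equal $0$, and $a_1^2=4a_0a_2$), yet $S_2=z_1a_1-z_2a_0=-1<0$, so $J$ is not TP. Worse, the conclusion fails too: iterating the production rule (row $n+1$ of the array equals row $n$ times $J$) starting from the row $(1,0,0,\dots)$ gives successive rows
\[
(0,1,0,0,\dots),\quad (0,0,1,0,\dots),\quad (0,1,2,1,0,\dots),
\]
so the almost-Riordan array contains the submatrix $\left(\begin{smallmatrix}0&1\\1&2\end{smallmatrix}\right)$ on rows $\{2,3\}$ and columns $\{1,2\}$, whose determinant is $-1$. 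Hence no argument can rescue this degenerate case; the theorem needs an additional hypothesis --- for example $w_0>0$, or directly $z_1a_1-2z_2a_0\geqslant 0$ --- and under such a hypothesis your argument (families (i)--(iii) together with your estimate for $S_k$) is complete, whereas the paper's own proof remains incomplete because it never examines family (ii) at all.
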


\begin{proof}
By the assumptions $a_1^2=4a_0a_2$, $a_1\geqslant 0$, and $(\ref{eq:det(T_n)})$, we get that $\det(T_n)\geqslant 0$. To show $det(J_n)\geqslant 0$, by $(\ref{eq:det(J_n)})$, it suffices to check the signs of the expressions 
\[
\begin{array}{l}
w_0z_1-z_0w_1,\quad 
w_0z_1a_1-w_0z_2a_0-w_1z_0a_1=(w_0z_1-w_1z_0)a_1-w_0z_2a_0,
\\
(w_0z_1-w_1z_0)(n-1)\left(\frac{a_1}2\right)^{n-2}-w_0z_2a_0(n-2)\left(\frac{a_1}2\right)^{n-3}\textrm{ for } n>3.
\\
\end{array}
\]
The last expression is nonnegative due to 
\[
\frac{a_1(w_0z_1-w_1z_0)}{2w_0z_2a_0}\geqslant\frac{n-2}{n-1}\quad\textrm{for all } n>3, 
\]
and $\sup_{n>3}\left\{\frac{n-2}{n-1}\right\}=1$. Therefore,  
we need to have  
\[
w_0z_1-z_0w_1, 
(w_0z_1-z_0w_1)a_1-w_0a_0z_2,
(w_0z_1-z_0w_1)a_1-2w_0a_0z_2\geqslant 0,
\] 
which are guaranteed by the conditions given by the theorem and the following inequality:
\[
(w_0z_1-w_1z_0)a_1-w_0 z_2a_0\geqslant (w_0z_1-w_1z_0)a_1-2w_0z_2a_0.
\]
\end{proof}

Case when the characteristic equation has two roots, requires considering two more cases. First, consider the situation of the two real roots. 

\begin{theorem}
\label{thm:two_roots_R}
Suppose that $a_0,a_1,a_2,z_0,z_1,z_2,w_0,w_1\geqslant 0$ and $a_1^2-4a_0a_2>0$. 
If the numbers 
\[
w_0z_1-z_0w_1, \quad 
w_0z_1a_1-w_0z_2a_0-w_1z_0a_1 
\]
are nonnegative, then the almost-Riordan array $(d|\,g,f)$ defined by its sequence characterization shown in (\ref{eq:A-Z_general-1})-(\ref{eq:A-Z_general-3}) is TP.
\end{theorem}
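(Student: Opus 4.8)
The plan is to follow the template of the proof of Theorem~\ref{thm:one_root}, substituting the two-distinct-root branch of the closed form~\eqref{eq:det(T_n)} for the double-root branch. By the quoted criterion of Pinkus, since $J$ is tridiagonal with nonnegative entries it is TP exactly when every contiguous principal minor is nonnegative, and by Proposition~\ref{pro:3.1} this yields total positivity of $(d|\,g,f)$. Write $\lambda_\pm=\frac{a_1\pm\sqrt{a_1^2-4a_0a_2}}{2}$. Since $a_0a_2\geqslant 0$ forces $\sqrt{a_1^2-4a_0a_2}\leqslant a_1$, we get $0\leqslant\lambda_-<\lambda_+\leqslant a_1$, and \eqref{eq:det(T_n)} gives $\det(T_m)=\frac{1}{\sqrt{a_1^2-4a_0a_2}}\bigl(\lambda_+^{\,m+1}-\lambda_-^{\,m+1}\bigr)\geqslant 0$. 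Thus every contiguous minor lying in rows and columns of index $\geqslant 2$, being a Toeplitz determinant $\det(T_m)$, is automatically nonnegative, and only the minors meeting the first two rows remain.

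First I would reduce the leading minors $\det(J_n)$ to the Toeplitz ones. Setting $P:=w_0z_1-w_1z_0\geqslant 0$ and $Q:=w_0z_2a_0\geqslant 0$, the recurrence in~\eqref{eq:det(J_n)} reads $\det(J_n)=P\,\det(T_{n-2})-Q\,\det(T_{n-3})$ for every $n\geqslant 3$ (using $\det(T_0)=1$, $\det(T_1)=a_1$). Factoring out $\det(T_{n-3})\geqslant 0$ gives $\det(J_n)=\det(T_{n-3})\bigl(P\,r_{n-2}-Q\bigr)$, where $r_m:=\det(T_m)/\det(T_{m-1})$. Hence the signs of all leading minors collapse to the single one-sided bound $P\,r_m\geqslant Q$ for all $m\geqslant 1$.

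The step I expect to be the main obstacle is precisely this inequality, and it is where the stated hypothesis appears insufficient. From the closed form, $r_m=\frac{\lambda_+^{\,m+1}-\lambda_-^{\,m+1}}{\lambda_+^{\,m}-\lambda_-^{\,m}}$ satisfies $r_m-\lambda_+=\frac{\lambda_-^{\,m}(\lambda_+-\lambda_-)}{\lambda_+^{\,m}-\lambda_-^{\,m}}\geqslant 0$, and $r_m$ is non-increasing in $m$ with $\lim_m r_m=\lambda_+$. So $\inf_m r_m=\lambda_+$, and ``$P\,r_m\geqslant Q$ for all $m$'' is equivalent to the limiting inequality $P\lambda_+\geqslant Q$, that is
\[
(w_0z_1-w_1z_0)\,\frac{a_1+\sqrt{a_1^2-4a_0a_2}}{2}\;\geqslant\; w_0z_2a_0 .
\]
This is the exact analogue of the factor-$2$ condition $Pa_1\geqslant 2Q$ of Theorem~\ref{thm:one_root}, where $\lambda_+=\lambda_-=a_1/2$ makes $P\lambda_+\geqslant Q$ read $Pa_1\geqslant 2Q$. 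By contrast, the hypothesis as written, $w_0z_1a_1-w_0z_2a_0-w_1z_0a_1\geqslant 0$, is only the instance $P\,r_1\geqslant Q$ with $r_1=a_1$, i.e. the nonnegativity of the single minor $\det(J_3)$; since $r_m$ is \emph{decreasing}, this is the least restrictive instance rather than the binding one, so I expect the argument to stall here.

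To confirm the gap I would record a counterexample. Take $a_0=a_2=1$, $a_1=3$ (so $a_1^2-4a_0a_2=5>0$ and $\lambda_+=\frac{3+\sqrt5}{2}\approx 2.618$), with $z_0=w_1=0$, $z_1=w_0=1$, $z_2=2.8$. All entries are nonnegative, $P=1$, $Q=2.8$, and the stated hypothesis $Pa_1-Q=3-2.8=0.2\geqslant 0$ holds; but $\det(T_1)=3$ and $\det(T_2)=a_1^2-a_0a_2=8$, whence $\det(J_4)=P\det(T_2)-Q\det(T_1)=8-8.4=-0.4<0$. Thus under the stated hypotheses $J$ need not be TP and the route through Proposition~\ref{pro:3.1} does not apply. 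The remedy is to replace the second hypothesis by $P\lambda_+\geqslant Q$: then $P\,r_m\geqslant P\lambda_+\geqslant Q$ for every $m$, so all $\det(J_n)\geqslant 0$, and the remaining contiguous minors starting at row $1$ (which equal $z_1\det(T_{j-1})-z_2a_0\det(T_{j-2})=\det(T_{j-2})(z_1 r_{j-1}-z_2a_0)$) are controlled by the same device under the companion bound $z_1\lambda_+\geqslant z_2a_0$. With these corrections the proof closes along the lines of Theorem~\ref{thm:one_root}.
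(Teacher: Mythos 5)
Your analysis is correct, and the obstacle you anticipate is not a defect of your attempt but a genuine gap in the paper's own proof. The paper's argument consists of exactly two observations: that the two hypothesized quantities are $\det(J_2)$ and $\det(J_3)$ from \eqref{eq:det(J_n)}, and that positivity of the roots makes the difference $\lambda_+^{n+1}-\lambda_-^{n+1}$, hence every $\det(T_n)$, positive. It then stops; the minors $\det(J_n)=(w_0z_1-w_1z_0)\det(T_{n-2})-w_0z_2a_0\det(T_{n-3})$ for $n>3$ are never verified, and, as you show, they can be negative under the stated hypotheses. Your reduction to the ratios $r_m=\det(T_m)/\det(T_{m-1})$, the facts $r_m\geqslant\lambda_+$ and $r_m\to\lambda_+$, and the conclusion that the binding condition is $(w_0z_1-w_1z_0)\lambda_+\geqslant w_0z_2a_0$ rather than the hypothesized $(w_0z_1-w_1z_0)a_1\geqslant w_0z_2a_0$ are all correct; so is your observation that Theorem~\ref{thm:one_root} carries the correct limiting constant (there $\lambda_+=a_1/2$, producing the factor $2$) while the present theorem does not. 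One small repair to your counterexample: the paper's normalization forces $z_0=g_0=1$, so your choice $z_0=0$ is not realizable by an almost-Riordan array; but taking $z_0=1$, $w_1=0$ instead leaves $P=1$, $Q=2.8$ unchanged and still gives $\det(J_4)=P\det(T_2)-Q\det(T_1)=8-8.4=-0.4<0$. This repaired example satisfies every hypothesis of the theorem yet $J$ is not TP, which also falsifies the ``if'' direction of the paper's Theorem~\ref{thm:3.4}.

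Two caveats on scope. First, your example refutes the intermediate claim that $J$ is TP, and hence the paper's proof, but not necessarily the theorem's conclusion: total positivity of $J$ is sufficient but not necessary for total positivity of $(d|\,g,f)$ (the paper itself emphasizes this in its counterexample subsection), so the theorem as stated is left unproven rather than disproven. Second, your proposed repair is the right one, but note that Pinkus' criterion requires all contiguous principal minors, not only the leading ones; the minors starting at row $1$ equal $z_1\det(T_{j-1})-z_2a_0\det(T_{j-2})$, as you say, and when $w_0>0$ their nonnegativity already follows from the leading minors via the identity $w_0\left(z_1\det(T_{j-1})-z_2a_0\det(T_{j-2})\right)=\det(J_{j+1})+w_1z_0\det(T_{j-1})$, while for $w_0=0$ your companion bound $z_1\lambda_+\geqslant z_2a_0$ must be imposed separately. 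With the strengthened hypothesis $(w_0z_1-w_1z_0)\lambda_+\geqslant w_0z_2a_0$ and that companion bound, every finite leading section of $J$ is a Jacobi matrix with nonnegative contiguous principal minors, so $J$ is TP and Proposition~\ref{pro:3.1} completes the argument exactly as you outline.
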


\begin{proof}
Clearly, the two numbers from the claim come from Eq.$(\ref{eq:det(J_n)})$, whereas nonnegativity of $a_0$, $a_1$, $a_2$ and positivity of $a_1^2-4a_0a_2$ ensure the positivity of roots of the characteristic equation and consequently of the difference $\left(\frac{a_1+\sqrt{a_1^2-4a_0a_2}}{2}\right)^{n+1}-\left(\frac{a_1-\sqrt{a_1^2-4a_0a_2}}{2}\right)^{n+1}$. 
\end{proof}

For two complex roots we have the following. 

\begin{theorem}
\label{thm:det(T_n)<0}
Let $a_0,a_1,a_2,z_0,z_1,z_2,w_0,w_1\geqslant 0$. If $a_1^2-4a_0a_2<0$, then there exists $n\in\mathbb N$ such that $\det(T_n)<0$.
\end{theorem}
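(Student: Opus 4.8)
The plan is to exploit the closed form for $\det(T_n)$ recorded in \eqref{eq:det(T_n)} and to recast it in trigonometric form once the characteristic roots become complex. First I would observe that the hypothesis $a_1^2-4a_0a_2<0$ together with $a_0,a_1,a_2\geqslant 0$ forces $a_0a_2>0$, so both $a_0$ and $a_2$ are strictly positive; this is what makes the polar description of the roots available. Writing $\sqrt{a_1^2-4a_0a_2}=i\sqrt{4a_0a_2-a_1^2}$ for the principal root, the two roots $x_\pm=\tfrac{1}{2}\left(a_1\pm i\sqrt{4a_0a_2-a_1^2}\right)$ are complex conjugates of common modulus $|x_\pm|=\sqrt{a_0a_2}=:r$ and argument $\pm\theta$, where $\cos\theta=\tfrac{a_1}{2\sqrt{a_0a_2}}$. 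Because $a_1\geqslant 0$ and $4a_0a_2-a_1^2>0$, this angle satisfies $\theta\in(0,\pi/2]$.

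Next I would substitute $x_\pm=r e^{\pm i\theta}$ into the expression $\det(T_n)=\dfrac{x_+^{\,n+1}-x_-^{\,n+1}}{x_+-x_-}$ furnished by \eqref{eq:det(T_n)}. Using $x_\pm^{\,n+1}=r^{n+1}e^{\pm i(n+1)\theta}$ and $x_+-x_-=2ir\sin\theta$, the imaginary parts cancel and one obtains the real Chebyshev-type identity
\[
\det(T_n)=r^{n}\,\frac{\sin\big((n+1)\theta\big)}{\sin\theta}.
\]
Since $r^n=(a_0a_2)^{n/2}>0$ and $\sin\theta>0$, the sign of $\det(T_n)$ coincides with the sign of $\sin\big((n+1)\theta\big)$. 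It therefore remains to produce an integer $n\geqslant 1$ for which $\sin\big((n+1)\theta\big)<0$.

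For the final step I would use the elementary ``no-skip'' argument that the angular step $\theta$ is too small to leap over the interval on which sine is negative. Let $m_0$ be the least integer with $m_0\theta>\pi$; since $\theta\leqslant\pi/2<\pi$ we have $m_0\geqslant 2$, and by minimality $(m_0-1)\theta\leqslant\pi$, whence $m_0\theta\leqslant\pi+\theta\leqslant\tfrac{3\pi}{2}$. Thus $m_0\theta\in(\pi,\tfrac{3\pi}{2}]$, where $\sin$ is strictly negative, so taking $n=m_0-1\geqslant 1$ gives $\sin\big((n+1)\theta\big)<0$ and hence $\det(T_n)<0$.

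I expect the main obstacle to be purely bookkeeping: confirming the branch and sign conventions so that $\theta$ genuinely lands in $(0,\pi/2]$ (in particular handling the boundary case $a_1=0$, where $\theta=\pi/2$ and the winning index is $n=2$), and checking that the chosen $n$ is a legitimate matrix size $\geqslant 1$ rather than a degenerate value. The analytic content---the identity $\det(T_n)=r^n\sin\big((n+1)\theta\big)/\sin\theta$ and the fact that small angular increments cannot avoid the region where sine is negative---is routine once the polar form is in place.
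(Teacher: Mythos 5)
Your proof is correct and follows essentially the same route as the paper: both rewrite $\det(T_n)$ via the closed form \eqref{eq:det(T_n)} in polar coordinates as a positive quantity times $\sin\bigl((n+1)\theta\bigr)$ with $\theta\in\left(0,\frac\pi 2\right]$, and then show that some multiple of $\theta$ must land where the sine is negative. Your unified ``least $m_0$ with $m_0\theta>\pi$'' step merely replaces the paper's two-case analysis ($\theta=\frac\pi 2$ giving $n=2$, versus $\theta<\frac\pi 2$ giving $\pi<(n+1)\theta<2\pi$) and is, if anything, slightly tidier bookkeeping of the same idea.
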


\begin{proof}
Let us shortly write $r$ for the modulus of $\frac{a_1+i\sqrt{4a_0a_2-a_1^2}}2$ (note that since $a_1^2-4a_0a_2<0$, we can treat $\sqrt{4a_0a_2-a_1^2}$ as the arithmetic root) and $\theta$ for its argument. Since $4a_0a_2-a_1^2>0$, $\theta\in\left(0,\frac\pi 2\right]$. 
With this notation we have 
\[
\begin{array}{rl}
\det(T_n) & =\frac 1{i\sqrt{4a_0a_2-a_1^2}}\cdot r^{n+1}\left(e^{(n+1)i\theta}-e^{-(n+1)i\theta}\right)
\\
 & =\frac{2r^{n+1}}{\sqrt{4a_0a_2-a_1^2}}\sin\left((n+1)\theta\right).
\\
\end{array}
\] 

If $\theta=\frac\pi 2$, then for $n=2$, we obtain $\sin\frac{3\pi}2=-1$ in the above expression, so $\det(T_2)<0$. 

If $\theta<\frac\pi 2$, then there exists a number $n$, $n\geqslant 2$, such that $\frac\pi{n+1}<\theta<\frac{2\pi}{n+1}$. Then 
$\pi<(n+1)\theta<2\pi$, so the corresponding sine function value in the above expression is negative, and so is $\det(T_{n+1})$. 
\end{proof}

\begin{example}
As Theorem \ref{thm:det(T_n)<0} indicates, the negativity of $\det(T_n)$ is only a ``matter of time" in the sense that the sign of $\det(T_n)$ depends on the index. For instance, choosing $a_0=2$, $a_1=3$, $a_2=5$, we have $\arg(3+\sqrt{31}i)\approx 0.343\pi$, and $\det(T_2)=-4$. Yet, choosing $a_0=a_2=3$, $a_1=5$, we have $\arg(5+\sqrt{11}i)\approx 0.186\pi$. In this case $\det(T_2)$, $\det(T_3)$, $\det(T_4)$ are positive and only $\det(T_5)$ is the first one to be negative.  
\end{example}

Theorems \ref{thm:two_roots_R} and \ref{thm:det(T_n)<0} sum up in the following. 

\begin{theorem}\label{thm:3.4}
Suppose that $a_0,a_1,a_2,z_0,z_1,z_2,w_0,w_1\geqslant 0$ and $a_1^2-4a_0a_2\neq 0$. The production matrix $J$ is TP if and only if 
\[
a_1^2-4a_0a_2>0, 
\quad 
w_0z_1-w_1z_0,\quad 
w_0z_1a_1-w_0z_2a_0-w_1z_0a_1\geqslant 0.  
\]
\end{theorem}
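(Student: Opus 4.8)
The plan is to prove Theorem \ref{thm:3.4} by assembling the two previously established results, Theorem \ref{thm:two_roots_R} and Theorem \ref{thm:det(T_n)<0}, into a clean biconditional, and then verifying that the listed inequalities are not merely sufficient but also necessary for total positivity of $J$. The forward (sufficiency) direction is essentially already done: assuming $a_1^2-4a_0a_2>0$ together with the nonnegativity of $w_0z_1-w_1z_0$ and $w_0z_1a_1-w_0z_2a_0-w_1z_0a_1$, Theorem \ref{thm:two_roots_R} gives that the associated almost-Riordan array, and hence (by the Jacobi-matrix criterion from \cite{Pin} quoted above) the production matrix $J$, is TP. So the first step is just to invoke Theorem \ref{thm:two_roots_R} and record that these three conditions imply $J$ is TP.

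The reverse (necessity) direction is where the real work lies, and it splits according to the sign of the discriminant. First I would observe that the hypothesis $a_1^2-4a_0a_2\neq 0$ rules out the degenerate single-root case, so only the two-root cases survive. If $a_1^2-4a_0a_2<0$, then Theorem \ref{thm:det(T_n)<0} supplies an index $n$ with $\det(T_n)<0$; since every $\det(T_n)$ equals a principal minor of $J$ (these are exactly the consecutive-rows-and-columns minors appearing in the Jacobi criterion, cf. the recurrence \eqref{eq:det(J_n)} whose tail is governed by $\det(T_{n-2})$ and $\det(T_{n-3})$), this forces $J$ to fail total positivity. Hence TP of $J$ forces $a_1^2-4a_0a_2>0$. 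With the discriminant now positive, the remaining necessity is immediate: by the Jacobi criterion, TP of $J$ requires every principal minor $\det(J_n)$ to be nonnegative, and reading off the $n=2$ and $n=3$ cases of \eqref{eq:det(J_n)} gives precisely $w_0z_1-w_1z_0\geqslant 0$ and $w_0z_1a_1-w_0z_2a_0-w_1z_0a_1\geqslant 0$.

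The main obstacle I anticipate is the careful bookkeeping that each $\det(T_n)$ really is a principal minor of $J$ of the type the Jacobi criterion controls, so that a single negative $\det(T_n)$ genuinely obstructs total positivity; this is the logical hinge that lets Theorem \ref{thm:det(T_n)<0} do its job in the contrapositive. Once that identification is clear, the two directions close against each other. I would write the argument as: (i) sufficiency via Theorem \ref{thm:two_roots_R}; (ii) necessity of $a_1^2-4a_0a_2>0$ via the contrapositive of Theorem \ref{thm:det(T_n)<0}; and (iii) necessity of the two minor conditions by direct inspection of \eqref{eq:det(J_n)}. A brief remark would note that the excluded case $a_1^2-4a_0a_2=0$ is exactly the content of Theorem \ref{thm:one_root} and is therefore treated separately, which is why the present theorem carries the standing hypothesis $a_1^2-4a_0a_2\neq 0$.
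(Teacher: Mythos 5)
Your overall architecture is exactly the paper's: Theorem \ref{thm:3.4} is introduced there with the words that Theorems \ref{thm:two_roots_R} and \ref{thm:det(T_n)<0} ``sum up'' in it, so your assembly --- sufficiency from Theorem \ref{thm:two_roots_R}, necessity of $a_1^2-4a_0a_2>0$ from Theorem \ref{thm:det(T_n)<0} together with the observation that each $T_n$ is a contiguous principal submatrix of $J$, and necessity of the two inequalities by reading off the $n=2$ and $n=3$ cases of \eqref{eq:det(J_n)} --- is precisely the intended argument. Your steps (ii) and (iii) are correct as written; indeed for (iii) you do not even need the Jacobi criterion, since total positivity by definition forces every minor, in particular every leading principal minor, to be nonnegative.

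Step (i), however, contains a genuine logical error. You deduce total positivity of $J$ from total positivity of the almost-Riordan array (``Theorem \ref{thm:two_roots_R} gives that the associated almost-Riordan array, and hence \dots the production matrix $J$, is TP''). That implication is false: it is the converse of Proposition \ref{pro:3.1}, and the paper's own counterexample in Section 3.4 exhibits a TP almost-Riordan array whose production matrix is not TP. The Jacobi criterion of \cite{Pin} cannot bridge this gap, because it characterizes total positivity of a tridiagonal matrix in terms of that matrix's own entries and minors, not in terms of any associated Riordan-type array. The repair is to run the argument in the opposite direction: under $a_1^2-4a_0a_2>0$ and the two nonnegativity hypotheses, the formulas \eqref{eq:det(J_n)} and \eqref{eq:det(T_n)} show that all entries and the relevant contiguous principal minors of $J$ are nonnegative --- this is what the proof of Theorem \ref{thm:two_roots_R} actually establishes en route --- whence $J$ is TP by the quoted Jacobi criterion; the array's total positivity then follows from Proposition \ref{pro:3.1}, not the reverse. (One further point, which your write-up shares with the paper: the Jacobi criterion also requires the contiguous principal minors anchored at row and column $1$, of the form $z_1\det(T_{m-1})-z_2a_0\det(T_{m-2})$, which are covered neither by \eqref{eq:det(J_n)} nor by \eqref{eq:det(T_n)}; a complete proof should verify these as well.)
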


From (\ref{eq:A-Z_general-1})-\eqref{eq:A-Z_general-3} we have

\begin{align}
&d(t)=\frac{d_0(1-z_1t-z_2tf(t))}{F(t)}\label{0+1}\\
&g(t)=\frac{d_0z_0}{F(t)}\label{0+2}\\
&f(t)=\frac{1-a_1t-\sqrt{(a_1^2-4a_2a_0)t^2-2a_1t+1}}{2a_2t},
\end{align}
where 

\begin{equation}\label{0+4}
F(t)=1-(w_0+z_1) t+(w_0z_1-w_1z_0)t^2-z_2(1-w_0t) tf(t).
\end{equation}

The above formulas can be used to find out the TP almost-Riordan array $(d|\, g,f)$ from the characteristic sequences of a TP almost-Riordan array.

\subsection{Examples of TP almost-Riordan arrays}

Using the results presented at the start of this section, we can provide some examples of TP almost-Riordan arrays. 

\begin{example}\label{ex:delta=0_e1}
Defining the family of the sequences $A$, $Z$, $W$ as 
\[
A(t)=1+\alpha t+\frac{\alpha^2}4t^2,
\quad 
Z(t)=1+t+t^2,
\quad 
W(t)=1+\beta t,
\]
and denoting the family of the sequences $A$, $Z$, and $W$ by $AZW_1(\alpha, \beta)$, from Theorem \ref{thm:3.4} we conclude that the production matrix $J$ (\ref{eq:2}) is TP if and only if 
\[
\alpha, \beta\geqslant 0,\quad 
\alpha(1-\beta)\geqslant 2,
\]
what is depicted on Fig.\ref{pic:ex1}.

\begin{figure}[h]
\caption{Figure of Ex.\ref{ex:delta=0_e1}}
\label{pic:ex1}
\begin{center}
\begin{tikzpicture}
\begin{axis}[my style, width=8cm, height=4cm, xmin=0, xmax=9, ymin=-0.5, ymax=1.5, xtick={2}, ytick={1}, legend style={empty legend}]
\addplot [name path = A, color=red, domain=2:9] {1-2/x};
\addlegendentry{\textcolor{red}{$\beta=1-\frac 2\alpha$}};
\addplot [name path = B, color=black, domain=2:9, forget plot] {0};
\addplot [fill opacity=0.1, color=red] fill between [of = A and B, soft clip={domain=2:9}];
\end{axis}
\end{tikzpicture}
\end{center}
\end{figure}

 Consequently, by Proposition \ref{pro:3.1} and above calculations, we obtain the almost-Riordan arrays with their characterized sequences in the family 
\[
\{ AZW_1(\alpha, \beta):\alpha, \beta \geqslant 0, \beta \leqslant 1-2/\alpha\}.
\]

For instance, for $(\alpha,\beta)=(2,0)$, that is 
\[
A(t)=1+2t+t^2,
\quad 
Z(t)=1+t+t^2,
\quad 
W(t)=1,
\]
Assuming that $d_0=1$, we obtain TP almost-Riordan array 
\[
R=\left(d(t)|g(t),h(t)\right)=
\left[\begin{array}{ccccccc}
1&&&&&&\\
1&1&&&&&\\
1&2&1&&&&\\
1&4&4&1&&&\\
1&9&13&6&1&&\\
1&23&41&26&8&1&\\
\vdots&&&&&&\ddots\\
\end{array}\right]
\]
where from \eqref{0+1}-\eqref{0+4} we have $F(t)=(1-t)(1-t-tf(t))$ and 
\[
\begin{array}{l}
d(t)=\frac 1{1-t},\\
g(t)=\frac{d(t)}{1-t-t f(t)}=\frac 2{(1-t)(1+\sqrt{1-4t})},\\
f(t)=\frac{1-2t-\sqrt{1-4t}}{2t}.\\
\end{array}
\]

For $(\alpha,\beta)=\left(4,\frac 13\right)$ we obtain another TP almot-Riordan array 
\[
R=\left(d(t)|g(t),h(t)\right)=
\left[\begin{array}{ccccccc}
1&&&&&&\\
1&1&&&&&\\
\frac 43&2&1&&&&\\
2&\frac{13}3&6&1&&&\\
\frac{31}9&\frac{37}3&\frac{97}3&10&1&&\\
\frac{68}9&\frac{433}9&\frac{545}3&\frac{229}3&14&1&\\
\vdots&&&&&&\ddots\\
\end{array}\right]
\]
with 
\[
\begin{array}{l}
d(t)=\frac{3\left[1-t-tf(t)\right]}{3-6t+2t^2+(3t^2-3t)f(t)}=\frac{3\left(7-4t+\sqrt{1-8t}\right)}{24-48t+16t^2+(3t-3)\left(1-4t-\sqrt{1-8t}\right)},\\
g(t)=\frac 3{3-6t+2t^2+(3t^2-3t)f(t)}=\frac{24}{24-48t+16t^2+(3t-3)\left(1-4t-\sqrt{1-8t}\right)},\\
f(t)=\frac{1-4t-\sqrt{1-8t}}{8t}.\\
\end{array}
\]
\end{example}

\begin{example}\label{ex:delta=0_e2}
Putting now 
\[
A(t)=1+\alpha t+\frac{\alpha^2}4t^2,
\quad 
Z(t)=1+t+\beta t^2,
\quad 
W(t)=1+\frac 1 2t,
\]
denoting the family of the above sequences by $AZW_2(\alpha, \beta)$, and using Theorem \ref{thm:3.4} we derive this time that the corresponding production matrix $J$ (\ref{eq:2}) is TP if and only if
\[
0\leqslant\beta\leqslant\frac\alpha 4.
\]

\begin{figure}[h]
\caption{Figure of Ex.\ref{ex:delta=0_e2}}
\label{pic:ex2}
\begin{center}
\begin{tikzpicture}
\begin{axis}[my style, width=8cm, height=4cm, xmin=0, xmax=8, ymin=-0.5, ymax=2.5, xtick={2}, ytick={1}, legend style={empty legend}]
\addplot [name path = A, color=red, domain=0:9] {x/4};
\addlegendentry{\textcolor{red}{$\beta=\frac\alpha 4$}};
\addplot [name path = B, color=black, domain=0:9, forget plot] {0};
\addplot [fill opacity=0.1, color=red] fill between [of = A and B, soft clip={domain=0:9}];
\end{axis}
\end{tikzpicture}
\end{center}
\end{figure}

We denote the family of the characterized sequences by $\{ AZW_2(\alpha, \beta): 0\leq \beta\leq \alpha/4\}$, which feasible region is shown in Figure \ref{pic:ex2}. Hence, the almost-Riordan arrays with their characterized sequences in this family, for instance, $(\alpha, \beta)=(1, 1/4)$, $(1/2, 1/16)$, etc., are TP. 

For instance for $(\alpha,\beta)=\left(1,\frac 14\right)$, we obtain TP almost-Riordan array 
\[
R=\left(d(t)|g(t),h(t)\right)=
\left[\begin{array}{ccccccc}
1&&&&&&\\
1&1&&&&&\\
\frac 32&2&1&&&&\\
\frac 52&\frac{15}4&3&1&&&\\
\frac{35}8&7&7&4&1&&\\
\frac{63}8&\frac{105}8&15&\frac{45}4&5&1&\\
\vdots&&&&&&\\
\end{array}\right]
\]
with 
\[
\begin{array}{l}
d(t)=\frac{1-t+\sqrt{1-2t}}{1-2t+(1-t)\sqrt{1-2t}},\\
g(t)=\frac{2}{1-2t+(1-t)\sqrt{1-2t}},\\
f(t)=\frac{2(1-t-\sqrt{1-2t})}t.\\
\end{array}
\]
Another example can be 
\[
R=\left[
\begin{array}{ccccccc}
1&&&&&&\\
1&1&&&&&\\
\frac 32&2&1&&&&\\
\frac 52&\frac{57}{16}&\frac 52&1&&&\\
\frac{137}{32}&\frac{199}{32}&\frac{39}8&3&1&&\\
\frac{473}{64}&\frac{1383}{128}&\frac{283}{32}&\frac{103}{16}&\frac 72&1&\\
\vdots&&&&&&\ddots\\
\end{array}
\right]
\]
for $(\alpha,\beta)=\left(\frac 12,\frac 1{16}\right)$, with
\[
\begin{array}{l}
d(t)=\frac{3t-2-2\sqrt{1-t}}{-t^2+5t-2+2(t-1)\sqrt{1-t}},\\
g(t)=\frac{4}{2(1-t)\sqrt{1-t}+t^2-5t+2},\\
f(t)=\frac{8-4t-8\sqrt{1-t}}{t}.\\
\end{array}
\]
\end{example}

\begin{example}
\label{ex:delta>0_e1}
Let 
\[
A(t)=1+\alpha t+t^2,
\quad 
Z(t)=1+t+t^2,
\quad 
W(t)=1+\beta t,
\]
with $\alpha> 2$. Denote by $AZW_3(\alpha, \beta)$ the family of the sequences $A$, $Z$, $W$ defined above. Then, by Theorem \ref{thm:3.4},  the production matrix $J$ (\ref{eq:2}) is TP if and only if  
\[
 0\leqslant \beta\leqslant 1-\frac 1 \alpha. 
\]

For instance, if $(\alpha, \beta)\in \{AZW_3( \alpha, \beta):\alpha > 2, 0\leqslant \beta\leqslant 1-\frac 1 \alpha\}$ (see its feasible region in Figure \ref{pic:ex3}), say $(\alpha, \beta)=(2, 0)->(3,0)$ and $(4,1/3)$, the corresponding almost-Riordan arrays with the characteristic sequences $AZW_3(2,0)$ and $AZW_3(4, 1/3)$ are TP.

\begin{figure}[h]
\caption{Figure of Ex.\ref{ex:delta>0_e1}}
\label{pic:ex3}
\begin{center}
\begin{tikzpicture}
\begin{axis}[my style, width=8cm, height=4cm, xmin=0, xmax=8, ymin=-0.5, ymax=1.5, xtick={2}, ytick={1}, legend style={empty legend}]
\addplot [name path = A, color=red, domain=2:8] {1-1/x};
\addlegendentry{{\color {red} $\beta=1-\frac 1\alpha$}};
\addplot [name path = B, color=black, domain=2:9, forget plot] {0};
\addplot [fill opacity=0.1, color=red, forget plot] fill between [of = A and B, soft clip={domain=2:8}];
\end{axis}
\end{tikzpicture}
\end{center}
\end{figure}

For $(\alpha,\beta)=(3,0)$ we obtain TP almost-Riordan array 
\[
R=\left(d(t)|g(t),f(t)\right)=\left[\begin{array}{ccccccc}
1&&&&&&\\
1&1&&&&&\\
1&2&1&&&&\\
1&4&5&1&&&\\
1&10&20&8&1&&\\
1&421&78&45&11&1&\\
\vdots&&&&&&\ddots\\
\end{array}\right]
\]
with 
\[
\begin{array}{l}
d(t)=\frac 1{1-t},\\
g(t)=\frac 2{(1-t)(1+t+\sqrt{5t^2-6t+1})},\\
f(t)=\frac{1-3t-\sqrt{5t^2-6t+1}}{2t},\\
\end{array}
\]
whereas for $(\alpha,\beta)=\left(4,\frac 13\right)$
\[
R=\left(d(t)|g(t),f(t)\right)=\left[\begin{array}{ccccccc}
1&&&&&&\\
1&1&&&&&\\
\frac 43&3&1&&&&\\
2&\frac{28}3&7&1&&&\\
\frac{31}9&32&\frac{115}3&11&1&&\\
\frac{68}9&\frac{1090}9&\frac{589}3&\frac{250}3&15&1&\\
\vdots&&&&&&\ddots\\
\end{array}\right]
\]
where 
\[
\begin{array}{l}
d(t)=\frac{-6t-3-3\sqrt{12t^2-8t+1}}{8t^2-3t-3+(3t-3)\sqrt{12t^2-8t+1}},\\
g(t)=\frac 6{3(1-t)\sqrt{12t^2-8t+1}-8t^2-3t+3},\\
f(t)=\frac{1-4t-\sqrt{12t^2-8t+1}}{2t}.\\
\end{array}
\]
\end{example}

\begin{example}\label{ex:delta>0_e2}
Let this time 
\[
A(t)=1+\alpha t+t^2,
\quad 
Z(t)=1+t+\beta t^2,
\quad 
W(t)=1+\frac 13t,
\]
with  $2<\alpha\leqslant 3$. Denote by $AZW_4(\alpha, \beta)$ the family of the sequences defined above. By Theorem \ref{thm:3.4}, the production matrix $J$ (\ref{eq:2}) is TP if and only if
\[
0\leqslant \beta \leqslant 1-\frac \alpha 3.
\]
For instance, if $(\alpha, \beta)\in \{AZW_4( \alpha, \beta):3\geq \alpha > 2, 0\leq 0\leq \beta\leq \frac 1 2-\frac \alpha 6\}$ (see its feasible region in Figure \ref{pic:ex4}). 

\begin{figure}[h]
\caption{Figure of Ex.\ref{ex:delta>0_e2}}
\label{pic:ex4}
\begin{center}
\begin{tikzpicture}
\begin{axis}[my style, width=8cm, height=4cm, xmin=0, xmax=4, ymin=-0.5, ymax=1.5, xtick={2}, ytick={1}, legend style={empty legend}]
\addplot [name path = A, color=red, domain=2:3] {1-x/3};
\addlegendentry{\textcolor{red}{$\beta=1-\frac \alpha 3$}};
\addplot [name path = B, color=black, domain=2:3, forget plot] {0};
\addplot [fill opacity=0.1, color=red, forget plot] fill between [of = A and B, soft clip={domain=2:3}];
\end{axis}
\end{tikzpicture}
\end{center}
\end{figure}

The corresponding almost-Riordan array 
\[
R=\left(d(t)|g(t),f(t)\right)=\left[\begin{array}{ccccccc}
1&&&&&&\\
1&1&&&&&\\
\frac 43&2&1&&&&\\
2&\frac{10}3&5&1&&&\\
\frac{28}9&\frac{16}3&\frac{58}3&8&1&&\\
\frac{44}9&\frac{76}9&\frac{214}3&\frac{133}3&11&1&\\
\vdots&&&&&&\ddots\\
\end{array}\right]
\]
with the characteristic sequences $AZW_4(3,0)$ and 
\[
\begin{array}{l}
d(t)=\frac{3-3t}{2t^2-6t+3},\\
g(t)=\frac 3{2t^2-6t+3},\\
f(t)=\frac{1-3t-\sqrt{5t^2-6t+1}}{2t},\\
\end{array}
\]
is TP, as well as the array 
\[
R=\left(d(t)|g(t),f(t)\right)=\left[\begin{array}{ccccccc}
1&&&&&&\\
1&1&&&&&\\
\frac{17}{12}&2&1&&&&\\
\frac{19}8&\frac{27}8&\frac 92&1&&&\\
\frac{425}{96}&\frac{89}{16}&\frac{125}8&7&1&&\\
\frac{297}{32}&\frac{1793}{192}&\frac{413}8&\frac{273}8&\frac{19}2&1&\\
\vdots&&&&&&\ddots\\
\end{array}\right]
\]
with the characteristic sequences $AZW_4\left(2\frac 12,\frac 1{24}\right)$ and
\[
\begin{array}{l}
d(t)=\frac{101t-98+\sqrt{9t^2-20t+4}}{(t-1)\sqrt{9t^2-20t+4}-59t^2+185t-94},\\
g(t)=\frac{96}{(1-t)\sqrt{9t^2-20t+4}+59t^2-185t+94},\\
f(t)=\frac{2-5t-\sqrt{9t^2-20t+4}}{4t}.\\
\end{array}
\]
\end{example}

\subsection{Counterexample}

Proposition 3.1 raises a natural question: does the converse theorem also hold? Specifically, is the production matrix $J$ TP for every TP almost-Riordan array 
$R$? According to \cite{Slo}, since the direct sum of any Riordan array with the matrix $\left[ 1\right]$ results in an almost-Riordan array, the answer is negative. Nevertheless, we will present more example in our discussion.

\begin{example}
Since $f(t)=2t+t^2$ is a P\'olya frequency formal power series, by Theorem \ref{thm:2.4}, for every P\'olya frequency formal power series $g(t)$ and any $d(t)=d_0+d_1t$ with $d_0,d_1>0$, the almost-Riordan array $\left(d|g,f\right)$ is TP. Yet, since $\overline{f}(t)=\sqrt{t+1}-1$, we have 
\[
A(t)=\sqrt{t+1}+1=2+\frac t2-\frac{t^2}8+\frac{t^3}{16}-\frac{5t^4}{128}+\frac{7t^5}{256}-\cdots
\]
Since $A$ has some negative coefficients, the production matrix $J$ is, definitely, not TP.
\par 
For instance 
\begin{align*}
R=&(d(t)|\, g(t), f(t))=\left(1+3t|\,1+t,2t+t^2\right)\\
=&\left[\begin{matrix}
1&&&&&&\\
3&1&&&&&\\
0&1&2&&&&\\
0&0&3&4&&&\\
0&0&1&10&8&&\\
0&0&0&5&24&16&\\
\vdots&&&&&&\ddots\\
\end{matrix}\right]
\end{align*}
is TP, but its production matrix generated by 
\begin{align*}
&A(t)=\sqrt{t+1}+1,\\
&Z(t)=\frac{\sqrt{t+1}-2t}{\sqrt{t+1}},\quad z_0=g_0/d_0=1\\
&W(t)=\frac{3\sqrt{t+1}-9t}{\sqrt{t+1}}, \quad w_0=d_1/d_0=3,
\end{align*}
begins 
\[
J=\left[\begin{matrix}
3&1&&&&&\\
-9&-2&2&&&&\\
\frac{9}{2}&1&\frac 12&2&&&\\
-\frac{27}{8}&-\frac{3}{4}&-\frac 18&\frac 12&2&&\\
\frac{45}{16}&\frac{5}{8}&\frac 1{16}&-\frac 18&\frac 12&2&\\
\vdots&&&&&&\ddots\\
\end{matrix}\right],
\]
which is not TP. 
\end{example}

\end{document}